\documentclass{article}

\usepackage[a4paper]{geometry}		
\usepackage[english]{babel}			
\usepackage[utf8]{inputenc}			
\usepackage[a-1b]{pdfx}			
\usepackage{fancyhdr}
\usepackage{comment}
\usepackage{enumerate}

\usepackage{graphicx}				
\usepackage{hologo}					

\usepackage{amssymb,amsmath,amsthm,bbm} 
\usepackage{mathrsfs}
\usepackage{listings}	
\usepackage{mathtools}

\hypersetup{colorlinks,allcolors=blue}

\newtheorem{Theorem}{Theorem}[section]
\newtheorem{Definition}{Definition}[section]
\newtheorem{Proposition}{Proposition}[section]
\newtheorem{Corollary}{Corollary}[section]

\newtheorem{Remark}{Remark}[section]

\newtheorem*{specialAssumption}{Assumption}
\numberwithin{equation}{section}

\makeatletter
\renewenvironment{proof}[1][\proofname] {\par\pushQED{\qed}\normalfont\topsep6\p@\@plus6\p@\relax\trivlist\item[\hskip\labelsep\itshape\bfseries#1\@addpunct{.}]\ignorespaces}{\popQED\endtrivlist\@endpefalse}
\makeatother

\def \N{\mathbb{N}}
\def \R{\mathbb{R}}

\def \E{\mathbb{E}}
\def \F{\mathbb{F}}

\def \P{\mathbb{P}}

\def \S{\mathbb{S}}

\def \Ac{{\cal A}}

\def \Cc{{\cal C}}

\def \Fc{{\cal F}}
\def \Gc{{\cal G}}
\def \Hc{{\cal H}}

\def \Lc{{\cal L}}

\def \Sc{{\cal S}}
\def \Tc{{\cal T}}

\DeclareMathOperator*{\esssup}{ess\,sup}

\title{On the optimal stopping problem for diffusions and an approximation result for stopping times}
\author{
Andrea COSSO\footnote{Universit\`a degli Studi di Milano; andrea.cosso@unimi.it} \quad\qquad
Laura PERELLI\footnote{Universit\`a degli Studi di Milano; laura.perelli@unimi.it\vspace{2mm}\\\textbf{Acknowledgements.} A. Cosso acknowledges support from GNAMPA-INdAM (of which L. Perelli is also member), the MUR project PRIN 2022 ``Entropy martingale optimal transport and McKean-Vlasov equations'', and the MUR project PRIN 2022 PNRR ``Probabilistic methods for energy transition''.}}
\date{March 4, 2025}

\allowdisplaybreaks

\begin{document}
\maketitle


\begin{abstract}
\noindent In this article, we study the classical finite-horizon optimal stopping problem for multidimensional diffusions through an approach that differs from what is typically found in the literature. More specifically, we first prove a key equality for the value function from which a series of results easily follow. This equality enables us to prove that the classical stopping time, at which the value function equals the terminal gain, is the smallest optimal stopping time, without resorting to the martingale approach and relying on the Snell envelope. Moreover, this equality allows us to rigorously demonstrate the dynamic programming principle, thus showing that the value function is the viscosity solution to the corresponding variational inequality. Such an equality also shows that the value function does not change when the class of stopping times varies. To prove this equality, we use an approximation result for stopping times, which is of independent interest and can find application in other stochastic control problems involving stopping times, as switching or impulsive problems, also of mean field type.
\end{abstract}

\vspace{5mm}

\noindent {\bf Keywords:} optimal stopping problem; dynamic programming; variational inequality; viscosity solution.

\vspace{5mm}

\noindent {\bf Mathematics Subject Classification (2020):} 60G40, 49L20, 49L25, 49J40.

\section{Introduction}

In the present article, we consider the classical finite-horizon optimal stopping problem for a state process evolving according to the following stochastic differential equation:
\begin{equation}\label{SDE_Intro}
    \begin{cases}
        dX_s = b(s,X_s)ds + \sigma(s,X_s)dW_s, & s\in[t,T],\\
        X_t = x\in\R^d.
    \end{cases}
\end{equation}
Here $W=(W_s)_{s\in[t,T]}$ is an $m$-dimensional Brownian motion defined on a complete probability space $(\Omega,\Fc,\P)$, while the coefficients $b,\sigma\colon[0,T]\times\R^d\rightarrow\R^d,\R^{d\times m}$ satisfy the standard Lipschitz and linear growth conditions (for precise assumptions, see the first paragraph of Section \ref{S:Problem}). We denote by $X^{t,x}=(X_s^{t,x})_{s\in[t,T]}$ the solution to equation \eqref{SDE_Intro}. We let $\F=(\Fc_s)_{s\in[0,T]}$ be the $\P$-completion of the filtration generated by $W$. Moreover, for every $t\in[0,T]$, we let $\F^t=(\Fc_s^t)_{s\in[t,T]}$ be the $\P$-completion of the filtration generated by the Brownian increments $(W_s-W_t)_{s\in[t,T]}$. We then consider the reward functional in its classical form:
\[
    J(t,x,\tau) \ = \ \E\Bigg[\int_{t}^{\tau} f(s,X^{t,x}_s)ds \ + \ g(X^{t,x}_\tau)\Bigg], \quad \text{for all }\tau\in\Tc_{t,T},
\]
where $\Tc_{t,T}$ denotes the family of all $[t,T]$-valued stopping times with respect to $\F$. Given $\theta\in\Tc_{0,T}$, we also define $\Tc_{\theta,T}$ in a similar way. Notice that, for the optimal stopping problem starting at time $t$, it is reasonable to consider either all stopping times in $\Tc_{t,T}$ or to restrict to the subset $\tilde\Tc_{t,T}\subset\Tc_{t,T}$ consisting of all $\F^t$-stopping times. Therefore, we define the two value functions
\[
    v(t,x) \ = \ \sup_{\tau\in\Tc_{t,T}} J(t,x,\tau), \qquad\qquad \tilde v(t,x) \ = \ \sup_{\tau\in\tilde\Tc_{t,T}} J(t,x,\tau).
\]
We will prove that the functions $v$ and $\tilde v$ are indeed equal (Corollary \ref{C:v=v_tilde}). This is a consequence of the following key equality for $\tilde v$ (and hence for $v$), which corresponds to Theorem \ref{T:v_tilde} and is one of the main results of the paper: given $\theta\in\Tc_{0,T}$ and a $\Fc_\theta$-measurable random variable $\xi\colon\Omega\rightarrow\R^d$,
\begin{equation}\label{Vtilde_Intro}
    \tilde v(\theta,\xi) \ = \ \esssup_{\tau\in\Tc_{\theta,T}} \E\bigg[\int_{\theta}^{\tau} f(s,X^{\theta,\xi}_s)ds \ + \ g(X^{\theta,\xi}_\tau) \bigg|\Fc_\theta\bigg], \qquad \P\text{-a.s.}
\end{equation}
We provide a direct proof of this equality, based on subsequent approximations of both $\theta$ and $\xi$ by discrete random variables. This proof relies in particular on an approximation result for stopping times (Theorem \ref{T:Approx}), which is of independent interest and can be applied to other stochastic control problems involving stopping times, such as switching or impulsive problems, including those of mean field type (see for instance \cite{CossoPerelli_MeanField}, where it is applied to a class of mean field optimal stopping problems). From equality \eqref{Vtilde_Intro} we show (Theorem \ref{T:Optimal}) that the stopping time $\hat\tau^{t,x}\in\tilde\Tc_{t,T}$, given by
\begin{equation}\label{OptStoppTime_Intro}
\hat{\tau}^{t,x} \ = \ \inf\big\{s\in[t,T] \ \big| \ v(s,X^{t,x}_s) = g(X^{t,x}_s)\big\},
\end{equation}
is the smallest optimal stopping time for the problem that starts at time $t$ from state $x$. To prove this, we do not need to use the martingale approach to optimal stopping and to rely on the Snell envelope (see Remark \ref{R:Snell} for more details on this point). 
It is worth noting that the martingale approach has been adopted since the very beginning to the study of optimal stopping problems, see for instance \cite{ArrowBlackwellGirshick,Snell,ChowRobbinsSiegmund}. For an updated presentation, we refer to the book \cite{PeskirShiryaev}.

\noindent By equality \eqref{Vtilde_Intro} we also derive the dynamic programming principle for $v$ (Theorem \ref{T:DPP}): for every $\tau'\in\Tc_{t,T}$,
\[
v(t,x) \ = \ \sup_{\tau\in\Tc_{t,T}} \E\bigg[\int_{t}^{\tau\land\tau'} f(s,X^{t,x}_s)ds + g(X^{t,x}_\tau)\mathbbm{1}_{\{\tau<\tau'\}} + v(\tau',X^{t,x}_{\tau'})\mathbbm{1}_{\{\tau\geq\tau'\}}\bigg].
\]
As it is well-known, we can then show that $v$ is a viscosity solution to the following variational inequality (Theorem \ref{T: V tilde solution HJB}; we report the proof of this result for completeness):
\[
\begin{cases}
    \vspace{2mm}\max\big\{\partial_t v + \langle b,\partial_x v\rangle + \frac{1}{2}\text{tr}(\sigma\sigma^T \partial^2_x v) + f, g - v\big\}=0, & \qquad \text{on }[0,T)\times\R^d, \\
    v(T,x) = g(x), &\qquad x\in\R^d.
\end{cases}
\]
Although equality \eqref{Vtilde_Intro} is somehow known in the theory of optimal stopping (see for instance \cite[formulae (2.2.65)-(2.2.67)]{PeskirShiryaev}), to our knowledge a rigorous proof of it is still missing. 
This is perhaps because rigorous approaches are rarely developed in such specific contexts, being instead typically set in more general Markov settings where, for instance, the notions of excessive and superharmonic functions are used.
In this regard, see the following classical references: \cite{Shiryaev,ElKaroui,KaratzasShreve98,PeskirShiryaev}. Moreover, for optimal stopping of diffusions, see \cite{JackaLynn,Pedersen} and, in book form, \cite{Pham,TouziBook}. Regarding rigorous proofs of the dynamic programming principle for optimal stopping problems, see also the references \cite{BouchardTouzi,DeAngelisMilazzo,ElKarouiTanII,Krylov}. Finally, for the viscosity solutions approach to optimal stopping, we mention \cite{OksendalReikvam,Pham97}.\\ 
\\
\noindent The rest of the paper is organized as follows. In Section \ref{S:Problem}, we formulate the finite-horizon optimal stopping problem for a multidimensional diffusion. In particular, we prove the key equality for $\tilde v$ (Theorem \ref{T:v_tilde}) and show that $v$ and $\tilde v$ coincide (Corollary \ref{C:v=v_tilde}). We then show that $\hat\tau^{t,x}$ in \eqref{OptStoppTime_Intro} is the smallest optimal stopping time. Afterward, we derive the dynamic programming principle for $v$ (Theorem \ref{T:DPP}) and prove that $v$ is a viscosity solution to the Hamilton-Jacobi-Bellman equation (Theorem \ref{T: V tilde solution HJB}). 
Finally, in Section \ref{S:Approx}, we prove the approximation result for stopping times (Theorem \ref{T:Approx}).
\section{The optimal stopping problem}\label{S:Problem}
\textbf{Probabilistic framework and notations.} Let $(\Omega,\Fc,\P)$ be a complete probability space and let $T>0$. Consider an $m$-dimensional Brownian motion $W = (W^1_t,\dots,W^m_t)_{t\in[0,T]}$ and denote by $\F=(\Fc_t)_{t\in[0,T]}$ the $\P$-completion of its natural filtration. Moreover, for every $t\in[0,T]$, denote by $\F^t=(\Fc_s^t)_{s\in[t,T]}$ the $\P$-completion of the filtration of the Brownian increments $(W_s-W_t)_{s\in[t,T]}$. Notice that $\F^0$ coincides with $\F$. Given $t\in[0,T]$, we denote by $\Tc_{t,T}$ and $\tilde\Tc_{t,T}$ the set of all $[t,T]$-valued stopping times with respect to $\F$ and $\F^t$, respectively. For every $\theta\in\Tc_{0,T}$ we set $\Fc_\theta:=\{A\in\Fc_T|A\cap\{\theta\leq t\}\in\Fc_t,\;\forall\,t\in[0,T]\}$ and we denote by $\Tc_{\theta,T}$ the set of all $\F$-stopping times $\tau$ such that $\theta\leq\tau\leq T$. Given a sub-$\sigma$-algebra $\Gc$ of $\Fc$, we denote by $L^2(\Gc;\R^d)$ the set of square-integrable, $\Gc$-measurable, $d$-dimensional random variables. Finally, we denote by $\S^2$ the set of c\`adl\`ag, $d$-dimensional, $\F$-progressively measurable processes $X=(X_t)_{t\in[0,T]}$ such that
\[
    \|X\|_{ \S^2} \ := \ \E\bigg[\sup_{0\leq t\leq T}|X_t|^2\bigg]^{1/2} \ < \ +\infty.
\]
Let $b,\sigma,f\colon[0,T]\times\R^d\longrightarrow\R^d,\R^{d\times m},\R$ and $g\colon\R^d\longrightarrow\R$ be measurable functions satisfying the following assumptions, which will always be in force.
\begin{specialAssumption}[\textbf{A}]\label{A1}
\quad\begin{enumerate}[1)]
    \item $b,\sigma$ are Lipschitz continuous in $x$ uniformly in $t$: there exists $L>0$ such that, for all  $t\in[0,T]$, $x,y\in\R^d$,
    \[
        |b(t,x)-b(t,y)| \ + \ |\sigma(t,x)-\sigma(t,y)| \ \leq \ L\,|x-y|;
    \]
    \item there exists $K>0$ such that, for all $t\in[0,T]$,
    \[
        |b(t,0)| \ + \ |\sigma(t,0)| \ \leq \ K.
    \]
    \item $f$ is continuous in $x$ uniformly in $t$ and $g$ is continuous;
    \item $f$ has polynomial growth in $x$ uniformly in $t$ and $g$ has polynomial growth: there exists $q\geq0$ and $K>0$ such that, for every $t\in[0,T]$, $x\in\R^d$,
    \[
        |f(t,x)| \ + \ |g(x)| \ \leq \ K(1 + |x|^q).
    \]
    \end{enumerate}
\end{specialAssumption}

\noindent\textbf{State process.} For every $t\in[0,T]$ and $x\in\R^d$, the state process evolves according to the following stochastic differential equation:
\begin{equation}\label{SDE}
    \begin{cases}
        dX_s = b(s,X_s)ds + \sigma(s,X_s)dW_s, & s\in[t,T],\\
        X_t = x.
    \end{cases}
\end{equation}
It is well-known that under Assumption (\nameref{A1}) we have existence and uniqueness for equation \eqref{SDE}, as stated in the following proposition, whose standard proof is not reported (see for instance \cite[Chapter 9]{Baldi}).
\begin{Proposition}
    For every $t\in[0,T]$ and $x\in\R^d$, there exists a unique process $X^{t,x}\in\S^2$ (up to $\P$-indistinguishability) solution to equation \eqref{SDE} and such that $X_s^{t,x}\equiv x$, for $s\in[0,t)$. Moreover, $X^{t,x}$ satisfies the following estimate: for every $p\geq2$, there exists a constant $C_p>0$ such that
    \[
    \E\bigg[\sup_{t\leq s\leq T}|X_s^{t,x}|^p\bigg] \ \leq \ C_p(1 + |x|^p).
    \]
\end{Proposition}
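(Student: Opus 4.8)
The plan is to follow the classical scheme for stochastic differential equations with Lipschitz, linearly growing coefficients: first establish existence and uniqueness in $\S^2$ by a fixed-point argument, then derive the $L^p$ estimate by combining It\^o's formula with the Burkholder--Davis--Gundy and Gronwall inequalities. Note that Assumption (\nameref{A1})(1)--(2) yields a linear growth bound $|b(r,y)|+|\sigma(r,y)|\le K'(1+|y|)$ for a constant $K'>0$ depending only on $L,K$, which is what powers all the estimates below.

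\textbf{Existence and uniqueness.} Fix $t\in[0,T]$ and $x\in\R^d$. I would introduce, on the space of c\`adl\`ag, $\F$-progressively measurable processes $Y=(Y_s)_{s\in[t,T]}$ with $\E[\sup_{t\le s\le T}|Y_s|^2]<\infty$, the map
\[
(\Phi Y)_s \ = \ x \ + \ \int_t^s b(r,Y_r)\,dr \ + \ \int_t^s \sigma(r,Y_r)\,dW_r, \qquad s\in[t,T].
\]
The linear growth bound guarantees that $\Phi Y$ is again such a process (continuous, in fact). Using the Lipschitz property of $b,\sigma$ together with the Cauchy--Schwarz inequality on the drift term and the Burkholder--Davis--Gundy inequality on the diffusion term, one checks that for the equivalent Bielecki-type norm $\|Y\|_\beta^2:=\E\big[\sup_{t\le s\le T}e^{-\beta s}|Y_s|^2\big]$, with $\beta$ chosen large enough, $\Phi$ is a strict contraction; alternatively, one obtains a contraction on each short subinterval $[t,t+\delta],[t+\delta,t+2\delta],\dots$ with $\delta$ depending only on $L,T$, and patches the pieces together. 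The Banach fixed-point theorem then produces a unique solution $X^{t,x}\in\S^2$ on $[t,T]$, which we extend by setting $X_s^{t,x}\equiv x$ for $s\in[0,t)$; this preserves c\`adl\`ag paths and $\F$-progressive measurability. For uniqueness up to $\P$-indistinguishability, if $X,Y$ both solve \eqref{SDE} then the same Lipschitz/Burkholder--Davis--Gundy estimates give $\E\big[\sup_{t\le r\le s}|X_r-Y_r|^2\big]\le C\int_t^s \E\big[\sup_{t\le u\le r}|X_u-Y_u|^2\big]\,dr$ for every $s\in[t,T]$, and Gronwall's lemma forces the left-hand side to vanish.

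\textbf{The moment estimate.} Let $p\ge 2$. To sidestep the lack of smoothness of $y\mapsto|y|^p$ at the origin, I would apply It\^o's formula to $\varphi(y):=(1+|y|^2)^{p/2}$ along $X^{t,x}$ stopped at $\tau_n:=\inf\{s\ge t:|X_s^{t,x}|\ge n\}\wedge T$, which makes every term integrable and the stochastic integral a true martingale:
\[
\varphi(X_{s\wedge\tau_n}^{t,x}) \ = \ \varphi(x) \ + \ \int_t^{s\wedge\tau_n}\! (\Lc\varphi)(r,X_r^{t,x})\,dr \ + \ M_{s\wedge\tau_n},
\]
where, thanks to the linear growth of $b,\sigma$, the generator term is dominated by $C(1+\varphi(X_r^{t,x}))$. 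Taking the supremum over $s\in[t,T]$, estimating $\E\big[\sup_{t\le s\le T}|M_{s\wedge\tau_n}|\big]$ via the Burkholder--Davis--Gundy inequality by $\tfrac12\E\big[\sup_{t\le s\le T}\varphi(X_{s\wedge\tau_n}^{t,x})\big]$ plus a controllable remainder, and absorbing that half into the left-hand side, one obtains
\[
\E\Big[\sup_{t\le s\le T}\varphi(X_{s\wedge\tau_n}^{t,x})\Big] \ \le \ C\,\varphi(x) \ + \ C\int_t^T \E\Big[\sup_{t\le u\le r}\varphi(X_{u\wedge\tau_n}^{t,x})\Big]\,dr.
\]
Gronwall's lemma then bounds the left-hand side by $C\,\varphi(x)$ uniformly in $n$, and letting $n\to\infty$ (so $\tau_n\uparrow T$, using Fatou's lemma) and recalling $|y|^p\le(1+|y|^2)^{p/2}\le 2^{p/2}(1+|y|^p)$ gives the stated estimate with a suitable $C_p$.

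\textbf{Main difficulty.} None of this is deep, but the point that needs care is closing the Gronwall argument for the moment bound: one cannot estimate $\E\big[\sup_s\varphi(X_s^{t,x})\big]$ before knowing it is finite, which is precisely why the localization by $\tau_n$ is needed, and why the Burkholder--Davis--Gundy step must be arranged so that the coefficient multiplying $\sup_s\varphi(X_{s\wedge\tau_n}^{t,x})$ is strictly less than $1$, so that it can be absorbed before Gronwall is applied and $n\to\infty$ is taken.
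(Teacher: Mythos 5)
Your argument is correct and is precisely the classical Picard/Bielecki fixed-point construction plus the localized It\^o--Burkholder--Davis--Gundy--Gronwall moment bound that the paper itself declines to reproduce, deferring instead to the standard reference (Baldi, Chapter 9). The one delicate point you flag---localizing so that the supremum term can be absorbed with coefficient strictly less than $1$ before applying Gronwall and letting $n\to\infty$ via Fatou---is handled correctly, so there is nothing to add.
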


\begin{Remark}\label{R:Random_initial_condition}
    It follows from Kolmogorov's continuity theorem that, up to a modification, $X^{t,x}$ is continuous with respect to $(t,x)\in[0,T]\times\R^d$ (see for instance \cite[Theorem 9.9]{Baldi}). As a consequence, given $\theta\in\Tc_{0,T}$ and $\xi\in L^2(\Fc_\theta;\R^d)$, the process $X^{\theta,\xi}$ is well-defined and belongs to $\S^2$ $($setting arbitrarily $X_s^{\theta,\xi}\equiv0$ whenever $s<\theta$$)$. Moreover, $X^{\theta,\xi}$ solves the stochastic differential equation \eqref{SDE} between $s=\theta$ and $s=T$ with initial condition $\xi$. 
    We also recall that the flow property holds for equation \eqref{SDE}: for every $\tau\in\Tc_{0,T}$, the processes $(X_s^{t,x})_{\tau\leq s\leq T}$ and $(X_s^{\tau,X_\tau^{t,x}})_{\tau\leq s\leq T}$ solve the same equation between $s=\tau$ and $s=T$, with initial condition $X_\tau^{t,x}$, therefore they are $\P$-indistinguishable.
\end{Remark}

\noindent \textbf{Optimization functional and value function.} We now introduce the gain functional $J$ and the value function $v$ of the optimal stopping problem. Given $t\in[0,T]$, $x\in \R^d$, we define the gain functional
\[
    J(t,x,\tau) \ \coloneqq \ \E\bigg[\int_{t}^{\tau} f(s,X^{t,x}_s)ds \ + \ g(X^{t,x}_\tau)\bigg],\quad\text{for all }\tau\in\Tc_{t,T},
\]
and the value function
\[
    v(t,x) \ \coloneqq \ \sup_{\tau\in\Tc_{t,T}} J(t,x,\tau).
\]
\noindent Under Assumption (\nameref{A1}), both $J$ and $v$ are well-defined. In particular, we have the following result, whose standard proof is not reported.
\begin{Proposition}\label{P:v}
    The value function $v$ is continuous and has polynomial growth in $x$ uniformly in $t$: there exists $C>0$ such that, for all $t\in[0,T]$, $x\in\R^d$,
        \[
            |v(t,x)| \ \leq \ C(1+|x|^q).
        \]
\end{Proposition}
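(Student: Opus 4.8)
\noindent\emph{Sketch of the proof.} The polynomial growth bound is immediate: for $t\in[0,T]$, $x\in\R^d$ and $\tau\in\Tc_{t,T}$, the polynomial growth condition in Assumption (\nameref{A1}) gives
\[
|J(t,x,\tau)| \ \le \ \E\Big[\int_t^T|f(s,X^{t,x}_s)|\,ds + |g(X^{t,x}_\tau)|\Big] \ \le \ K(T+1)\Big(1 + \E\big[\sup_{t\le s\le T}|X^{t,x}_s|^q\big]\Big),
\]
and the moment estimate for \eqref{SDE} (applied with exponent $\max\{q,2\}$, using $|y|^q\le 1+|y|^{\max\{q,2\}}$) bounds the right-hand side by $C(1+|x|^q)$, with $C$ independent of $t$ and $\tau$. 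Taking the supremum over $\tau$ proves $|v(t,x)|\le C(1+|x|^q)$.

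The continuity I would derive from a single \emph{uniform-in-$\tau$ stability estimate} for the gain functional. Extend $J$ to random initial data, $J(t,\xi,\tau):=\E\big[\int_t^\tau f(s,X^{t,\xi}_s)\,ds + g(X^{t,\xi}_\tau)\big]$ for $\xi\in L^p(\Fc_t;\R^d)$ with $p>\max\{q,2\}$ fixed ($X^{t,\xi}$ being well defined by Remark \ref{R:Random_initial_condition}). The claim is that $\sup_{\tau\in\Tc_{t,T}}|J(t,\xi,\tau)-J(t,\xi',\tau)|\to0$ as $\|\xi-\xi'\|_{L^p}\to0$, with a rate depending only on $\|\xi\|_{L^p}+\|\xi'\|_{L^p}$. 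To prove it I would combine the standard stability estimate $\|X^{t,\xi}-X^{t,\xi'}\|_{\S^p}\le C_p\|\xi-\xi'\|_{L^p}$ and the moment bound with a truncation that handles the merely continuous (non-Lipschitz) data $f,g$: on the event where $\sup_{t\le s\le T}|X^{t,\xi}_s-X^{t,\xi'}_s|\le\delta$ and $\sup_{t\le s\le T}(|X^{t,\xi}_s|\vee|X^{t,\xi'}_s|)\le R$, the relevant differences are controlled, uniformly in $\tau$, by the moduli of continuity of $f$ and $g$ on the ball of radius $R$; on the complementary event one bounds $|f|,|g|$ by their polynomial majorants and uses Cauchy--Schwarz together with the $L^p$-moment bounds and the fact that this event has small probability once $R$ is large and $\|\xi-\xi'\|_{L^p}$ small. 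Taking $\xi\equiv x$, $\xi'\equiv x'$ and using $|v(t,x)-v(t,x')|\le\sup_{\tau}|J(t,x,\tau)-J(t,x',\tau)|$ gives continuity of $x\mapsto v(t,x)$, with a modulus uniform in $t\in[0,T]$.

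It then remains to compare $v(t,x)$ and $v(t',x)$ for $t<t'$. For $\tau\in\Tc_{t',T}\subseteq\Tc_{t,T}$, the flow property (Remark \ref{R:Random_initial_condition}) gives $X^{t,x}_s=X^{t',\xi}_s$ for $s\ge t'$, where $\xi:=X^{t,x}_{t'}$, so that $J(t,x,\tau)-J(t',x,\tau)=\E\big[\int_t^{t'}f(s,X^{t,x}_s)\,ds\big]+\big(J(t',\xi,\tau)-J(t',x,\tau)\big)$; the first term is $O(t'-t)$ by the growth bound, and the second tends to $0$ uniformly in $\tau$ by the previous step, since $\|\xi-x\|_{L^p}\to0$ as $t'\downarrow t$ (a standard small-time estimate for \eqref{SDE} via Burkholder--Davis--Gundy and Gronwall). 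Conversely, mapping $\tau\in\Tc_{t,T}$ to $\tau\vee t'\in\Tc_{t',T}$, one gets the same type of error on $\{\tau\ge t'\}$, while on $\{\tau<t'\}$ the two functionals reduce to $\E[g(x)\mathbbm{1}_{\{\tau<t'\}}]$ and $\E[(\int_t^\tau f(s,X^{t,x}_s)\,ds + g(X^{t,x}_\tau))\mathbbm{1}_{\{\tau<t'\}}]$, whose difference is small again because $\sup_{t\le s\le t'}|X^{t,x}_s-x|\to0$ in $L^p$. Combining both directions gives $|v(t,x)-v(t',x)|\le\omega(|t'-t|)$ with $\omega$ depending only on a bound for $|x|$, and together with the $t$-uniform $x$-continuity this yields joint continuity.

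The main obstacle is the uniformity over $\tau$ in the stability estimate for $J$: because $f,g$ are only continuous with polynomial growth, one cannot bound $|J(t,\xi,\tau)-J(t,\xi',\tau)|$ by a constant times $\|\xi-\xi'\|$, so the truncation must be set up so that both the ``good-event'' bound (modulus of continuity on a ball) and the ``bad-event'' bound (uniform integrability from the moment estimates) hold uniformly in $\tau$ --- which works because $\tau$ enters only through $X^{t,\xi}_\tau$, dominated by $\sup_{t\le s\le T}|X^{t,\xi}_s|$. The comparison across the different families $\Tc_{t,T}\ne\Tc_{t',T}$ via $\tau\mapsto\tau\vee t'$ and the small-time control of $X^{t,x}$ near $x$ are the remaining routine points.
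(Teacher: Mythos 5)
The paper omits the proof of this proposition, explicitly calling it ``standard'' and not reporting it, so there is no argument of the authors' to compare yours against; judged on its own merits, your sketch is the standard proof and is correct. The polynomial growth bound is exactly as you write, via the moment estimate with exponent $\max\{q,2\}$. For the continuity you correctly isolate the two genuine (if routine) difficulties --- the uniform-in-$\tau$ stability of $J$ under perturbation of the initial datum, which cannot be a Lipschitz estimate because $f,g$ are merely continuous with polynomial growth and therefore requires the truncation you describe (modulus of continuity on a compact ball on the good event, uniform integrability from the moment bounds on its complement, both uniform in $\tau$ since $\tau$ enters only through $\sup_{t\le s\le T}|X^{t,\xi}_s|$), and the mismatch between the families $\Tc_{t,T}$ and $\Tc_{t',T}$, which you handle via the flow property and the map $\tau\mapsto\tau\vee t'$ --- and both resolve as you describe.
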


\noindent We also introduce the function
\[
\tilde v(t,x) \ \coloneqq \ \sup_{\tau\in\tilde\Tc_{t,T}} J(t,x,\tau).
\]
This is another value function for the optimal stopping problem, characterized by a smaller set of admissible stopping times. We will prove that the functions $v$ and $\tilde v$ indeed coincide, see Corollary \ref{C:v=v_tilde}. We firstly provide a key equality for $\tilde v$ (and hence for $v$), which is stated in the following theorem (see Remark \ref{R:Random_initial_condition} for the definition of $X^{\theta,\xi}$).

\begin{Theorem}\label{T:v_tilde}
Let $\theta\in\Tc_{0,T}$ and $\xi\in L^2(\Fc_\theta;\R^d)$. Then
\[
    \tilde v(\theta,\xi) \ = \ \esssup_{\tau\in\Tc_{\theta,T}} \E\bigg[\int_{\theta}^{\tau} f(s,X^{\theta,\xi}_s)ds \ + \ g(X^{\theta,\xi}_\tau) \bigg|\Fc_\theta\bigg], \qquad \P\text{-a.s.}
\]
\end{Theorem}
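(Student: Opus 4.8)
The plan is to prove the identity in three stages of increasing generality — for deterministic $(\theta,\xi)=(t,x)$, then for $\theta,\xi$ taking finitely many values, and finally for arbitrary $\theta\in\Tc_{0,T}$ and $\xi\in L^2(\Fc_\theta;\R^d)$ by approximation — with Theorem \ref{T:Approx} as the key technical input in the last stage. Write $R^{\theta,\xi}(\tau):=\int_\theta^\tau f(s,X^{\theta,\xi}_s)\,ds+g(X^{\theta,\xi}_\tau)$, so that the claim reads $\tilde v(\theta,\xi)=\esssup_{\tau\in\Tc_{\theta,T}}\E[R^{\theta,\xi}(\tau)\mid\Fc_\theta]$ a.s. The guiding idea is that, after $\theta$, the process $X^{\theta,\xi}$ is driven only by the Brownian increments posterior to $\theta$, which carry the same information as the $\F^\theta$-increments and, when $\theta\equiv t$ is deterministic, are independent of $\Fc_t$; hence the extra information contained in $\Fc_\theta$ beyond the pair $(\theta,\xi)$ is useless, and one recovers the value $\tilde v$ built from $\F^t$-stopping times.

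\emph{Deterministic and discrete data.} Fix $t,x$. If $\tau\in\tilde\Tc_{t,T}$ then $\tau$ and $X^{t,x}$ are $\Fc^t_T$-measurable, hence independent of $\Fc_t$, so $\E[R^{t,x}(\tau)\mid\Fc_t]=J(t,x,\tau)$; taking the supremum over $\tilde\Tc_{t,T}$ gives $\esssup_{\tau\in\Tc_{t,T}}\E[R^{t,x}(\tau)\mid\Fc_t]\ge\tilde v(t,x)$. Conversely, for $\tau\in\Tc_{t,T}$ one disintegrates along $\Fc_t$: using the independence of $\Fc_t$ and $\Fc^t_T$, for $\P$-a.e.\ $\omega$ the section $\tau(\omega,\cdot)$ is an $\F^t$-stopping time and $\E[R^{t,x}(\tau)\mid\Fc_t](\omega)=J(t,x,\tau(\omega,\cdot))\le\tilde v(t,x)$; making this section argument rigorous in the $\P$-completed filtration is where a reduction to simple stopping times, or Theorem \ref{T:Approx}, enters. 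Next, if $\theta$ is valued in $\{t_1<\dots<t_n\}$ and $\xi=\sum_{i,j}x_{ij}\mathbbm{1}_{A_{ij}}$ with $\{A_{ij}\}$ a finite measurable partition of $\Omega$, $A_{ij}\subseteq\{\theta=t_i\}$ and $A_{ij}\in\Fc_{t_i}$ (hence $A_{ij}\in\Fc_\theta$), then on $A_{ij}$ one has $X^{\theta,\xi}=X^{t_i,x_{ij}}$ and $\E[\,\cdot\mid\Fc_\theta]=\E[\,\cdot\mid\Fc_{t_i}]$, so the upper bound follows from the deterministic case applied to $\tau\vee t_i$ on each piece; for the lower bound, given $\varepsilon>0$ pick $\tau'_{ij}\in\tilde\Tc_{t_i,T}$ with $J(t_i,x_{ij},\tau'_{ij})\ge\tilde v(t_i,x_{ij})-\varepsilon$ and paste $\tau:=\sum_{i,j}\tau'_{ij}\mathbbm{1}_{A_{ij}}$, which lies in $\Tc_{\theta,T}$ because $A_{ij}\in\Fc_{t_i}$ and $\{\tau'_{ij}\le s\}\in\Fc^{t_i}_s$, and satisfies $\E[R^{\theta,\xi}(\tau)\mid\Fc_\theta]=\sum_{i,j}\mathbbm{1}_{A_{ij}}J(t_i,x_{ij},\tau'_{ij})\ge\tilde v(\theta,\xi)-\varepsilon$.

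\emph{General data.} Approximate $\theta$ from above by discrete stopping times $\theta_n\downarrow\theta$ and $\xi$ in $L^2$ by discrete $\Fc_{\theta_n}$-measurable $\xi_n$; by the stability of the flow (Remark \ref{R:Random_initial_condition}) and the polynomial-growth and moment bounds, $\sup_{[0,T]}|X^{\theta_n,\xi_n}-X^{\theta,\xi}|\to0$ with adequate uniform integrability. For $\le$, given $\tau\in\Tc_{\theta,T}$ apply the discrete case to $\tau\vee\theta_n\in\Tc_{\theta_n,T}$, obtaining $\E[R^{\theta_n,\xi_n}(\tau\vee\theta_n)\mid\Fc_{\theta_n}]\le\tilde v(\theta_n,\xi_n)$, and pass to the limit: the left-hand side tends to $\E[R^{\theta,\xi}(\tau)\mid\Fc_\theta]$ by combining $L^1$-convergence of the rewards with L\'evy's backward martingale convergence theorem along $\Fc_{\theta_n}\downarrow\Fc_{\theta+}=\Fc_\theta$, while the right-hand side tends to $\tilde v(\theta,\xi)$ by continuity of $\tilde v$ (obtained from the SDE estimates exactly as for $v$ in Proposition \ref{P:v}). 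For $\ge$, fix $\varepsilon>0$ and, by the discrete case, pick $\tau_n\in\Tc_{\theta_n,T}\subseteq\Tc_{\theta,T}$ with $\E[R^{\theta_n,\xi_n}(\tau_n)\mid\Fc_{\theta_n}]\ge\tilde v(\theta_n,\xi_n)-\varepsilon$; conditioning down to $\Fc_\theta$ and using that $\E[R^{\theta,\xi}(\tau_n)-R^{\theta_n,\xi_n}(\tau_n)\mid\Fc_\theta]\to0$ in $L^1$ uniformly in the choice of $\tau_n$ (the difference being dominated by a $\tau_n$-independent null sequence) together with $\E[\tilde v(\theta_n,\xi_n)\mid\Fc_\theta]\to\tilde v(\theta,\xi)$, one gets, along an a.s.-convergent subsequence, $\esssup_{\tau\in\Tc_{\theta,T}}\E[R^{\theta,\xi}(\tau)\mid\Fc_\theta]\ge\tilde v(\theta,\xi)-\varepsilon$ a.s.; letting $\varepsilon\downarrow0$ concludes.

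The crux is this last step: one must produce a single admissible stopping time that is $\varepsilon$-optimal in the conditional sense and, at the same time, control the limit of conditional expectations as both the rewards $R^{\theta_n,\xi_n}$ and the conditioning $\sigma$-algebras $\Fc_{\theta_n}$ vary — precisely the kind of stability that the approximation result for stopping times (Theorem \ref{T:Approx}) is designed to provide. A secondary, comparatively minor subtlety is the disintegration of a general $\F$-stopping time along $\Fc_t$ in the deterministic-time step, where the $\P$-completion of the filtration must be handled with care.
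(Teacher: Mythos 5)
Your proposal is correct and follows essentially the same route as the paper's proof: the same hierarchy of reductions (deterministic time with discrete initial datum, Theorem \ref{T:Approx} to pass from pasted $\F^t$-stopping times to general $\F$-stopping times, pasting over the values of a discrete $\theta$, and backward martingale convergence along $\Fc_{\theta_n}\downarrow\Fc_\theta$ for general $\theta$). The only differences are cosmetic --- you discretize $\xi$ together with $\theta_n$ in the final limit rather than treating general $\xi$ already at the deterministic-time stage, and your ``disintegration along $\Fc_t$'' heuristic is precisely what the paper replaces by the approximation of Theorem \ref{T:Approx}, which you correctly identify as the rigorous substitute.
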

\begin{proof}
    We split the proof into two main steps.
   
   \vspace{2mm}
   
   \noindent\textsc{Step} 1. We prove the inequality
    \[
        \tilde v(\theta,\xi) \ \geq \ \esssup_{\tau\in\Tc_{\theta,T}} \E\bigg[\int_{\theta}^{\tau} f(s,X^{\theta,\xi}_s)ds \ + \ g(X^{\theta,\xi}_\tau) \bigg|\Fc_\theta\bigg]
    \]
    through three steps.

    \vspace{2mm}
    
    \noindent\textsc{Step} 1.1. We assume that $\theta\equiv t\in[0,T]$ and $\xi = \sum_{i=1}^n x_i\mathbbm{1}_{A_i}$, with $x_i\in\R^d$ and $(A_i)_i\subset\Fc_t$ being a partition of $\Omega$. Then $\tilde v(t,\xi)=\sum_{i=1}^n \tilde v(t,x_i)\,\mathbbm{1}_{A_i}$. By definition of $\tilde v(t,x_i)$, for every $\tau\in\tilde\Tc_{t,T}$, we have (using also that $\tau$ is independent of $\Fc_t$)
    \begin{align*}
        \tilde v(t,\xi) \ &\geq \ \sum_{i=1}^n \E\bigg[\int_t^{\tau} f(s, X^{t,x_i}_s)ds + g(X^{t,x_i}_{\tau})\bigg]\mathbbm{1}_{A_i} \ = \ \sum_{i=1}^n \E\bigg[\int_t^{\tau} f(s, X^{t,x_i}_s)ds + g(X^{t,x_i}_{\tau})\bigg|\Fc_t\bigg]\mathbbm{1}_{A_i} \\
        &= \ \E\bigg[\sum_{i=1}^n\bigg(\int_t^{\tau} f(s, X^{t,\xi}_s)ds + g(X^{t,\xi}_{\tau})\bigg)\mathbbm{1}_{A_i}\bigg|\Fc_t\bigg] \ = \ \E\bigg[\int_t^{\tau} f(s, X^{t,\xi}_s)ds + g(X^{t,\xi}_{\tau})\bigg|\Fc_t\bigg].
    \end{align*} 
   By linearity, the above inequality is satisfied by any stopping time of the form $\tau = \sum_{i=1}^n\tau_i\mathbbm{1}_{A_i}$, where $\tau_i\in\tilde\Tc_{t,T}$ and $(A_i)_i$ is a partition of $\Omega$.
   
   \vspace{2mm}
   
   \noindent\textsc{Step} 1.2. We assume that $\theta\equiv t\in[0,T]$ and $\xi$ is as in \textsc{Step} 1.1, while $\tau\in\Tc_{t,T}$. From Theorem \ref{T:Approx} it follows that there exists a sequence $(\tau_n)_n\subset\Tc_{t,T}$ which converges $\P$-a.s. to $\tau$, where $\tau_n = \sum_{i=1}^{K_n}\tau_{n,i}\mathbbm{1}_{A_{n,i}}$, with $\tau_{n,i}\in\tilde\Tc_{t,T}$ and $A_{n,1},...,A_{n,K_n}\in\Fc_t$ being a partition of $\Omega$. By \textsc{Step} 1.1, for each $n$ we have 
    \[
        \tilde v(t,\xi) \ \geq \  \E\bigg[\int_t^{\tau_n} f(s, X^{t,\xi}_s)ds + g(X^{t,\xi}_{\tau_n})\bigg|\Fc_t\bigg].
    \]
    The claim follows letting $n\rightarrow+\infty$ and using the conditional dominated convergence theorem.

    \vspace{2mm}
    
    \noindent\textsc{Step} 1.3. We assume that $\theta\equiv t\in[0,T]$ and consider a generic $\xi\in L^2(\Fc_t;\R^d)$. By a standard result, there exists a sequence $(\xi_n)_n\subset L^2(\Fc_t;\R^d)$ such that $\xi_n$ converges to $\xi$ in $L^2$ and $\P$-a.s., moreover
    \[
        \xi_n = \sum_{i=1}^{N_n} x_{i,n}\mathbbm{1}_{A_{n,i}},
    \]
    with $x_{i,n}\in\R^d$ and $\{A_{n,1},...,A_{n,N_n}\}\subset\Fc_t$ being a partition of $\Omega$. By \textsc{Step} 1.2, for any $\tau\in\Tc_{t,T}$ it holds that
    \[
        \tilde v(t,\xi_n) \ \geq \   \E\bigg[\int_t^{\tau} f(s, X^{t,\xi_n}_s)ds + g(X^{t,\xi_n}_{\tau})\bigg|\Fc_t\bigg].
    \]
    As in the previous step, the claim follows letting $n\rightarrow+\infty$ and using the conditional dominated convergence theorem.

    \vspace{2mm}
    
    \noindent\textsc{Step} 1.4. We consider a generic $\xi\in L^2(\Fc_\theta;\R^d)$ and assume that $\theta=\sum_{i=1}^n t_i\mathbbm{1}_{B_i}$, with $t_i\in[0,T]$, $B_i\in\Fc_{t_i}$, $(B_i)_i$ being a partition of $\Omega$. Given $\tau\in\Tc_{\theta,T}$, let $\tau_i:=\tau\mathbbm{1}_{B_i}+T\mathbbm{1}_{B_i^c}$; notice that $\tau_i\in\Tc_{t_i,T}$. Then, from \textsc{Step} 1.3 we get
    \begin{align*}
        \tilde v(\theta,\xi) \ = \ \sum_{i=1}^n \tilde v(t_i,\xi)\,\mathbbm 1_{B_i} \ &\geq \ \sum_{i=1}^n \E\bigg[\int_{t_i}^{\tau_i} f(s, X^{t_i,\xi}_s)ds + g(X^{t_i,\xi}_{\tau_i})\bigg|\Fc_{t_i}\bigg]\mathbbm 1_{B_i} \\
        &= \ \sum_{i=1}^n \E\bigg[\bigg(\int_{t_i}^{\tau_i} f(s, X^{t_i,\xi}_s)ds + g(X^{t_i,\xi}_{\tau_i})\bigg)\mathbbm 1_{B_i}\bigg|\Fc_{t_i}\bigg] \\
        &= \ \sum_{i=1}^n \E\bigg[\bigg(\int_\theta^\tau f(s, X^{\theta,\xi}_s)ds + g(X^{\theta,\xi}_\tau)\bigg)\mathbbm 1_{B_i}\bigg|\Fc_{t_i}\bigg].       
    \end{align*}
    Now, notice that given $Y\in L^1$ it holds that $\E[Y\mathbbm 1_{B_i}|\Fc_{t_i}]=\E[Y\mathbbm 1_{B_i}|\Fc_\theta]$, then
    \[
        \tilde v(\theta,\xi) \ \geq \ \sum_{i=1}^n \E\bigg[\bigg(\int_\theta^\tau f(s, X^{\theta,\xi}_s)ds + g(X^{\theta,\xi}_\tau)\bigg)\mathbbm 1_{B_i}\bigg|\Fc_\theta\bigg] \ = \ \E\bigg[\int_\theta^\tau f(s, X^{\theta,\xi}_s)ds + g(X^{\theta,\xi}_\tau)\bigg|\Fc_\theta\bigg],
    \]
    which concludes the proof of \textsc{Step} 1.4.

    \vspace{2mm}
    
    \noindent\textsc{Step} 1.5. We consider a generic $\theta\in\Tc_{0,T}$ and a generic $\xi\in L^2(\Fc_\theta;\R^d)$. It is well-known that there exists a sequence $(\theta_n)_n\subset\Tc_{0,T}$ such that $\theta_n\downarrow\theta$ a.s., with each $\theta_n$ as in \textsc{Step} 1.4 (see for instance \cite[Lemma 3.3]{Baldi}). By \textsc{Step} 1.4 we have, for each $n$,
    \[
    \tilde v(\theta_n,\xi) \ \geq \ \E\bigg[\int_{\theta_n}^\tau f(s, X^{\theta_n,\xi}_s)ds + g(X^{\theta_n,\xi}_\tau)\bigg|\Fc_{\theta_n}\bigg].
    \]
    Let $Y:=\int_\theta^\tau f(s, X^{\theta,\xi}_s)ds + g(X^{\theta,\xi}_\tau)$ and $Y_n:=\int_{\theta_n}^\tau f(s, X^{\theta_n,\xi}_s)ds + g(X^{\theta_n,\xi}_\tau)$. Then, the claim follows from the fact that $\E[Y_n|\Fc_{\theta_n}]$ converges in $L^1$ to $\E[Y|\Fc_\theta]$. To prove this, we notice that
    \begin{align*}
    &\E\big[\big|\E[Y_n|\Fc_{\theta_n}]-\E[Y|\Fc_\theta]\big|\big] \ = \ \E\big[\big|\E[Y_n|\Fc_{\theta_n}]-\E[Y|\Fc_{\theta_n}]+\E[Y|\Fc_{\theta_n}]-\E[Y|\Fc_\theta]\big|\big] \\
    &\leq \ \E\big[\big|\E[Y_n-Y|\Fc_{\theta_n}]\big|\big] + \E\big[\big|\E[Y|\Fc_{\theta_n}]-\E[Y|\Fc_\theta]\big|\big] \leq \ \E\big[|Y_n-Y|\big] + \E\big[\big|\E[Y|\Fc_{\theta_n}]-\E[Y|\Fc_\theta]\big|\big],
    \end{align*}
    where $\E\big[|Y_n-Y|\big]$ goes to $0$ because $Y_n$ converges in $L^1$ to $Y$, and $\E\big[\big|\E[Y|\Fc_{\theta_n}]-\E[Y|\Fc_\theta]\big|\big]$ vanishes due to the backward martingale convergence theorem (see for instance \cite[Theorem 27.4]{JacodProtter}) together with the fact that $\Fc_\theta=\cap_n\Fc_{\theta_n}$.
    
    \vspace{2mm}
    
    \noindent\textsc{Step} 2. Our aim is to prove the opposite inequality
    \[
        \tilde v(\theta,\xi) \ \leq \ \esssup_{\tau\in\Tc_{\theta,T}} \E\bigg[\int_{\theta}^{\tau} f(s,X^{\theta,\xi}_s)ds \ + \ g(X^{\theta,\xi}_\tau) \bigg|\Fc_\theta\bigg].
    \]
    The proof consists in three steps, as in the case of the other inequality. We do not report the details, but we just notice that in the first step, when $\xi = \sum_{i=1}^n x_i\mathbbm{1}_{A_i}$, using the definition of $\tilde v(t,x_i)$, for every $\varepsilon>0$ and any $i$, there exists $\tau_{\varepsilon,i}\in\tilde\Tc_{t,T}$ such that
    \begin{align*}
        &\tilde v(t,\xi) \ = \ \sum_{i=1}^n \tilde v(t,x_i)\mathbbm{1}_{A_i} \ \leq \ \sum_{i=1}^n \E\bigg[\int_t^{\tau_{\varepsilon,i}} f(s, X^{t,x_i}_s)ds + g(X^{t,x_i}_{\tau_{\varepsilon,i}})\bigg]\mathbbm{1}_{A_i} + \varepsilon \\
        &= \ \E\bigg[\sum_{i=1}^n\bigg(\int_t^{\tau_{\varepsilon,i}} \!\! f(s, X^{t,x_i}_s)ds + g(X^{t,x_i}_{\tau_{\varepsilon,i}})\bigg)\mathbbm{1}_{A_i}\bigg|\Fc_t\bigg] + \varepsilon \ = \ \E\bigg[\int_t^{\tau_\varepsilon} f(s, X^{t,\xi}_s)ds + g(X^{t,\xi}_{\tau_\varepsilon})\bigg|\Fc_t\bigg] + \varepsilon,
    \end{align*}
    where $\tau_\varepsilon = \sum_{i=1}^n \tau_{\varepsilon,i}\mathbbm{1}_{A_i}$. The rest of the proof of \textsc{Step 2} proceeds similarly to \textsc{Step 1}.
\end{proof}
\noindent By Theorem \ref{T:v_tilde} it follows that $v$ and $\tilde v$ coincide, as stated in the following corollary.
\begin{Corollary}\label{C:v=v_tilde}
    Let $t\in[0,T]$ and $x\in\R^d$. Then 
    \[
        \tilde v(t,x) \ = \ \sup_{\tau\in\Tc_{t,T}}\E\bigg[\int_t^{\tau} f(s, X^{t,x}_s)ds + g(X^{t,x}_{\tau})\bigg] \ = \ v(t,x).
    \]
\end{Corollary}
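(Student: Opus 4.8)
The plan is to obtain the corollary as an immediate consequence of Theorem \ref{T:v_tilde}, by specializing the random data $(\theta,\xi)$ to the deterministic pair $(t,x)$. Note first that the middle expression in the statement is, by the very definitions of $J$ and $v$, nothing but $\sup_{\tau\in\Tc_{t,T}}J(t,x,\tau)=v(t,x)$; and since $\tilde\Tc_{t,T}\subseteq\Tc_{t,T}$, the supremum defining $\tilde v(t,x)$ ranges over a subset of the one defining $v(t,x)$, whence $\tilde v(t,x)\le v(t,x)$. Thus the only thing to prove is the reverse inequality $v(t,x)\le\tilde v(t,x)$.

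To this end I would apply Theorem \ref{T:v_tilde} with $\theta\equiv t$, which is a deterministic element of $\Tc_{0,T}$ with $\Fc_\theta=\Fc_t$, and $\xi\equiv x$, which is a (trivially square-integrable) $\Fc_t$-measurable random variable, so that $X^{\theta,\xi}=X^{t,x}$. This gives, $\P$-almost surely,
\[
\tilde v(t,x)\;=\;\esssup_{\tau\in\Tc_{t,T}}\E\!\left[\int_t^\tau f(s,X^{t,x}_s)\,ds+g(X^{t,x}_\tau)\,\Big|\,\Fc_t\right].
\]
The key point is that the left-hand side is a deterministic constant, so for each fixed $\tau\in\Tc_{t,T}$ we have $\tilde v(t,x)\ge\E[\int_t^\tau f(s,X^{t,x}_s)\,ds+g(X^{t,x}_\tau)\mid\Fc_t]$ $\P$-a.s.; taking expectations and using the tower property yields $\tilde v(t,x)\ge J(t,x,\tau)$. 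Taking the supremum over $\tau\in\Tc_{t,T}$ gives $\tilde v(t,x)\ge v(t,x)$, and combining the two inequalities finishes the proof.

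I do not expect any genuine obstacle: the only subtlety is the passage from an almost sure inequality between $\Fc_t$-measurable random variables to an inequality between their expectations, which costs nothing here precisely because $\tilde v(t,x)$ is a constant. If one preferred to avoid even that remark, one could instead observe that the family $\{\E[\,\cdot\mid\Fc_t]\}_{\tau\in\Tc_{t,T}}$ appearing in the essential supremum is directed upward --- for $\tau_1,\tau_2\in\Tc_{t,T}$ the stopping time equal to $\tau_1$ on the $\Fc_t$-set where the first conditional expectation dominates and to $\tau_2$ elsewhere realizes the pointwise maximum --- so that the essential supremum is an increasing a.s. limit of such conditional expectations, and monotone convergence gives $\tilde v(t,x)=\E[\tilde v(t,x)]=\sup_{\tau\in\Tc_{t,T}}\E[\,\cdot\,]=v(t,x)$ directly.
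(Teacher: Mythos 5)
Your proposal is correct and follows essentially the same route as the paper: the inequality $\tilde v\le v$ from the inclusion $\tilde\Tc_{t,T}\subseteq\Tc_{t,T}$, and the reverse inequality by applying Theorem \ref{T:v_tilde} with $\theta\equiv t$, $\xi\equiv x$, using that $\tilde v(t,x)$ is a deterministic constant to pass from the almost sure bound on conditional expectations to a bound on expectations. The alternative directedness remark is a fine extra but not needed.
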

\begin{proof}
The inequality $\tilde v\leq v$ follows immediately from the inclusion $\Tilde{\Tc}_{t,T}\subset\Tc_{t,T}$. Let us now prove the reverse inequality. An application of Theorem \ref{T:v_tilde} with $\theta\equiv t$ and $\xi\equiv x$ yields 
    \[
        \tilde v(t,x) \ = \ \esssup_{\tau\in\Tc_{t,T}}\E\bigg[\int_t^{\tau} f(s, X^{t,x}_s)ds + g(X^{t,x}_{\tau})\bigg|\Fc_t\bigg].
    \]
    Then, for every $\tau\in\Tc_{t,T}$, it holds that
    \[
        \tilde v(t,x) \ \geq \ \E\bigg[\int_t^{\tau} f(s, X^{t,x}_s)ds + g(X^{t,x}_{\tau})\bigg|\Fc_t\bigg].
    \]
    Recalling that $\Tilde{v}$ is deterministic, we get
    \[
        \tilde v(t,x) \ = \ \E[\Tilde{v}(t,x)] \ \geq \ \E\bigg[\int_t^{\tau} f(s, X^{t,x}_s)ds + g(X^{t,x}_{\tau})\bigg].
    \]
    From the arbitrariness of $\tau$, the claim follows.
\end{proof}

\noindent\textbf{Optimal stopping time.} Let $t\in[0,T]$ and $x\in\R^d$. We consider the stopping time $\hat{\tau}^{t,x}\in\tilde\Tc_{t,T}$ given by
\begin{equation}\label{OptStoppTime}
    \hat{\tau}^{t,x} \ \coloneq \ \inf\big\{s\in[t,T] \ \big| \ v(s,X^{t,x}_s) = g(X^{t,x}_s)\big\}.
\end{equation}

\begin{Theorem}\label{T:Optimal}
The stopping time $\hat{\tau}^{t,x}$ is the smallest optimal stopping time for the problem starting at time $t$ from state $x$, namely:
\begin{enumerate}[\upshape i)]
\item $v(t,x)$ $=$ $J\big(t,x,\hat{\tau}^{t,x}\big)$;
\item $\hat\tau^{t,x}\leq\tau^*$, $\P$-a.s., for any other optimal stopping time $\tau^*\in\Tc_{t,T}$.
\end{enumerate}
\end{Theorem}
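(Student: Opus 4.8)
The plan is to deduce everything directly from the conditional identity in Theorem \ref{T:v_tilde} (read through Corollary \ref{C:v=v_tilde}), with no recourse to a Snell envelope. First I would collect the preliminary facts. Since $v$ is continuous (Proposition \ref{P:v}), $g$ is continuous, and $s\mapsto X^{t,x}_s$ is continuous and $\F^t$-adapted on $[t,T]$ (Remark \ref{R:Random_initial_condition}), the map $s\mapsto v(s,X^{t,x}_s)-g(X^{t,x}_s)$ is continuous and $\F^t$-adapted; as $v(T,\cdot)=g$, its zero set is a nonempty closed subset of $[t,T]$, so $\hat\tau^{t,x}$ in \eqref{OptStoppTime} is an $\F^t$-stopping time (hence in $\tilde\Tc_{t,T}$), the infimum is attained, and $v(\hat\tau^{t,x},X^{t,x}_{\hat\tau^{t,x}})=g(X^{t,x}_{\hat\tau^{t,x}})$ $\P$-a.s. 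Also $g\le v$ pointwise (take $\tau\equiv s$ in the definition of $v$). Finally, combining Theorem \ref{T:v_tilde}, Corollary \ref{C:v=v_tilde} and the flow property of \eqref{SDE} (Remark \ref{R:Random_initial_condition}), for every $\sigma\in\Tc_{0,T}$ with $\sigma\ge t$ one has, $\P$-a.s.,
\[ v(\sigma,X^{t,x}_\sigma)=\esssup_{\tau\in\Tc_{\sigma,T}}\E\Big[\textstyle\int_\sigma^\tau f(s,X^{t,x}_s)\,ds+g(X^{t,x}_\tau)\,\Big|\,\Fc_\sigma\Big].\qquad(\star) \]
The family inside the $\esssup$ is upward directed (paste two stopping times along the $\Fc_\sigma$-set on which one conditional reward dominates the other), so the $\esssup$ is the $\P$-a.s. increasing limit of a sequence from the family; since everything is dominated by $Z:=\int_t^TK(1+|X^{t,x}_r|^q)\,dr+K(1+\sup_{r}|X^{t,x}_r|^q)\in L^1$, (conditional) dominated convergence lets me freely exchange limits with (conditional) expectations below.

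I would prove part ii) first, as it uses nothing from part i). Let $\tau^\ast\in\Tc_{t,T}$ be optimal. By $(\star)$ at $\sigma=\tau^\ast$ and upward-directedness, pick an increasing sequence $\rho_k\in\Tc_{\tau^\ast,T}$ with $\E[\int_{\tau^\ast}^{\rho_k}f(s,X^{t,x}_s)\,ds+g(X^{t,x}_{\rho_k})\mid\Fc_{\tau^\ast}]\uparrow v(\tau^\ast,X^{t,x}_{\tau^\ast})$ $\P$-a.s. Using $g(X^{t,x}_{\tau^\ast})\le v(\tau^\ast,X^{t,x}_{\tau^\ast})$ and then concatenating the integrals,
\[ v(t,x)=J(t,x,\tau^\ast)\le\E\Big[\textstyle\int_t^{\tau^\ast}f(s,X^{t,x}_s)\,ds+v(\tau^\ast,X^{t,x}_{\tau^\ast})\Big]=\lim_k J(t,x,\rho_k)\le v(t,x), \]
so all the inequalities are equalities; in particular $\E[v(\tau^\ast,X^{t,x}_{\tau^\ast})-g(X^{t,x}_{\tau^\ast})]=0$, and since $v\ge g$ this forces $v(\tau^\ast,X^{t,x}_{\tau^\ast})=g(X^{t,x}_{\tau^\ast})$ $\P$-a.s. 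By the definition of $\hat\tau^{t,x}$ this gives $\hat\tau^{t,x}\le\tau^\ast$ $\P$-a.s., which is ii).

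For part i) the inequality $v(t,x)\ge J(t,x,\hat\tau^{t,x})$ is immediate from $\hat\tau^{t,x}\in\Tc_{t,T}$. For the reverse, I would use $(\star)$ at $\sigma=t$ and directedness to obtain an increasing sequence $\tau_n\in\Tc_{t,T}$ with $J(t,x,\tau_n)\to v(t,x)$, and set $\sigma_n:=\tau_n\wedge\hat\tau^{t,x}$. The crux is that truncating at $\hat\tau^{t,x}$ does not lose value, i.e. $J(t,x,\sigma_n)\ge J(t,x,\tau_n)$: the difference is supported on $\{\tau_n>\hat\tau^{t,x}\}\in\Fc_{\hat\tau^{t,x}}$, and conditioning on $\Fc_{\hat\tau^{t,x}}$ it reduces to the inequality $\E[\mathbbm{1}_{\{\tau_n>\hat\tau^{t,x}\}}(\int_{\hat\tau^{t,x}}^{\tau_n}f(s,X^{t,x}_s)\,ds+g(X^{t,x}_{\tau_n}))\mid\Fc_{\hat\tau^{t,x}}]\le\mathbbm{1}_{\{\tau_n>\hat\tau^{t,x}\}}\,v(\hat\tau^{t,x},X^{t,x}_{\hat\tau^{t,x}})=\mathbbm{1}_{\{\tau_n>\hat\tau^{t,x}\}}\,g(X^{t,x}_{\hat\tau^{t,x}})$, which follows from $(\star)$ at $\sigma=\hat\tau^{t,x}$ applied to $\tau_n\vee\hat\tau^{t,x}\in\Tc_{\hat\tau^{t,x},T}$ together with $v(\hat\tau^{t,x},X^{t,x}_{\hat\tau^{t,x}})=g(X^{t,x}_{\hat\tau^{t,x}})$. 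Hence $v(t,x)\ge J(t,x,\sigma_n)\ge J(t,x,\tau_n)\to v(t,x)$, so $J(t,x,\sigma_n)\to v(t,x)$. Since $\sigma_n\uparrow\sigma_\infty:=(\sup_n\tau_n)\wedge\hat\tau^{t,x}\le\hat\tau^{t,x}$ with $\sigma_\infty\in\Tc_{t,T}$, dominated convergence (the same domination $Z$) and continuity of $g$ and of $X^{t,x}$ give $J(t,x,\sigma_n)\to J(t,x,\sigma_\infty)$, whence $J(t,x,\sigma_\infty)=v(t,x)$: $\sigma_\infty$ is optimal. Part ii) then yields $\sigma_\infty\ge\hat\tau^{t,x}$ $\P$-a.s., and combined with $\sigma_\infty\le\hat\tau^{t,x}$ we get $\sigma_\infty=\hat\tau^{t,x}$ $\P$-a.s., so $v(t,x)=J(t,x,\hat\tau^{t,x})$, proving i).

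I expect the genuine obstacle to be precisely the optimality assertion i): every monotone, supermartingale-type estimate one can extract from Theorem \ref{T:v_tilde} only reproduces the trivial inequality $J(t,x,\hat\tau^{t,x})\le v(t,x)$ and stubbornly refuses to deliver the reverse one. The way around this is not to estimate $J(t,x,\hat\tau^{t,x})$ directly, but to start from near-optimal stopping times, truncate them at $\hat\tau^{t,x}$ without loss of value — this is exactly where $v(\hat\tau^{t,x},X^{t,x}_{\hat\tau^{t,x}})=g(X^{t,x}_{\hat\tau^{t,x}})$ and $(\star)$ at $\hat\tau^{t,x}$ are used — pass to the increasing limit $\sigma_\infty$, and close the argument with part ii). The remaining ingredients are routine but deserve a line of care: the validity of $(\star)$ for a stopping-time $\sigma$ (via Corollary \ref{C:v=v_tilde} applied $\omega$-wise and the flow property), the upward-directedness and the associated increasing approximation of the essential supremum, the inclusion $\{\tau_n>\hat\tau^{t,x}\}\in\Fc_{\hat\tau^{t,x}}$, and the $L^1$-domination underlying all the (conditional) dominated convergence steps.
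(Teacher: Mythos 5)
Your architecture is sound and genuinely different in its details from the paper's: part ii) is proved correctly (it amounts to the paper's observation that any $\theta$ with $\P(\theta<\hat{\tau}^{t,x})>0$ is strictly suboptimal, obtained here by a clean global argument forcing $v(\tau^*,X^{t,x}_{\tau^*})=g(X^{t,x}_{\tau^*})$ a.s.), and the truncation lemma $J(t,x,\theta\wedge\hat{\tau}^{t,x})\geq J(t,x,\theta)$ is correct, including the point $\{\theta>\hat{\tau}^{t,x}\}\in\Fc_{\hat{\tau}^{t,x}}$. The genuine gap is in part i), at the step ``use $(\star)$ at $\sigma=t$ and directedness to obtain an increasing sequence $\tau_n$.'' Upward-directedness of the family $\big\{\E[\,\cdot\,|\Fc_\sigma]_\tau\big\}_\tau$ produces a sequence along which the \emph{conditional rewards} increase a.s.; the stopping times themselves are obtained by pasting and are in no way pointwise monotone. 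Your subsequent steps, however, require $\tau_n(\omega)$ nondecreasing in $n$: you need $\sigma_n=\tau_n\wedge\hat{\tau}^{t,x}\uparrow\sigma_\infty$ pointwise in order to invoke path continuity and dominated convergence and get $J(t,x,\sigma_n)\to J(t,x,\sigma_\infty)$. Producing a pointwise increasing maximizing sequence a priori is essentially as hard as the theorem itself (the natural candidates $\inf\{s\,:\,v(s,X^{t,x}_s)\leq g(X^{t,x}_s)+\varepsilon\}$ do increase as $\varepsilon\downarrow0$, but their near-optimality is precisely the content of i)), so this step cannot be waved through.

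The gap is patchable within your strategy. From the truncation lemma you have $\varepsilon_n$-optimal $\sigma_n\leq\hat{\tau}^{t,x}$; repeating your part ii) computation with $\sigma_n$ in place of $\tau^*$ gives $\E\big[(v-g)(\sigma_n,X^{t,x}_{\sigma_n})\big]\leq\varepsilon_n\to0$, hence $(v-g)(\sigma_n,X^{t,x}_{\sigma_n})\to0$ a.s.\ along a subsequence. Since $s\mapsto(v-g)(s,X^{t,x}_s)$ is continuous, nonnegative, and strictly positive on $[t,\hat{\tau}^{t,x})$ while $\sigma_n\leq\hat{\tau}^{t,x}$, every limit point of $\sigma_n(\omega)$ is a zero of this map in $[t,\hat{\tau}^{t,x}(\omega)]$, hence equals $\hat{\tau}^{t,x}(\omega)$; thus $\sigma_n\to\hat{\tau}^{t,x}$ a.s.\ along that subsequence, and dominated convergence yields $J(t,x,\hat{\tau}^{t,x})=\lim_n J(t,x,\sigma_n)=v(t,x)$. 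For comparison, the paper avoids any limit of stopping times: it shows that $J(t,x,\theta)\leq v(t,x)-\E\big[(v-g)(\theta,X^{t,x}_\theta)\mathbbm{1}_{\{\theta<\hat{\tau}^{t,x}\}}\big]$ for every $\theta$, and that $J(t,x,\hat{\tau}^{t,x})\geq J(t,x,\theta)$ for every $\theta\in\Tc_{\hat{\tau}^{t,x},T}$, each by a single application of Theorem \ref{T:v_tilde} at the relevant stopping time.
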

\begin{proof}
Fix $\theta\in\Tc_{t,T}$. Our aim is to prove the following:
\begin{enumerate}[1)]
\item if $\P(\theta<\hat\tau^{t,x})>0$ then $J(t,x,\theta)<v(t,x)$, namely $\theta$ can not be optimal;
\item if $\hat\tau^{t,x}\leq\theta$, $\P$-a.s., then $J(t,x,\hat\tau^{t,x})\geq J(t,x,\theta)$.
\end{enumerate}
It is clear that the claim follows directly from these two properties.

\vspace{2mm}

\noindent\emph{Proof of  1)}. Let
\[
\delta_\theta \ := \ \E\big[\big(v(\theta,X_\theta^{t,x})-g(X_\theta^{t,x})\big)\mathbbm{1}_{\{\theta<\hat\tau^{t,x}\}}\big].
\]
By definition of $v$ we have that $v\geq g$, so that $\delta_\theta\geq0$. Moreover, $\delta_\theta>0$ if and only if $\P(\theta<\hat\tau^{t,x})>0$. Now, we have
\begin{align*}
&J(t,x,\theta) \ = \ \E\bigg[\int_{t}^{\theta} f(s,X^{t,x}_s)ds + g(X^{t,x}_{\theta})\bigg] \ = \ \E\bigg[\bigg(\int_{t}^{\theta} f(s,X^{t,x}_s)ds + g(X^{t,x}_{\theta})\bigg)\mathbbm{1}_{\{\theta<\hat\tau^{t,x}\}}\bigg] + \\
&+ \E\bigg[\bigg(\int_{t}^{\theta} f(s,X^{t,x}_s)ds + g(X^{t,x}_{\theta})\bigg)\mathbbm{1}_{\{\theta=\hat\tau^{t,x}\}}\bigg] + \E\bigg[\bigg(\int_{t}^{\theta} f(s,X^{t,x}_s)ds + g(X^{t,x}_{\theta})\bigg)\mathbbm{1}_{\{\theta>\hat\tau^{t,x}\}}\bigg] \\
&= \ \E\bigg[\bigg(\int_{t}^{\theta} \!\!\! f(s,X^{t,x}_s)ds + v(\theta,X^{t,x}_{\theta})\!\bigg)\mathbbm{1}_{\{\theta\leq\hat\tau^{t,x}\}}\bigg] - \delta_\theta + \E\bigg[\bigg(\int_{t}^{\theta} \!\!\! f(s,X^{t,x}_s)ds + g(X^{t,x}_{\theta})\!\bigg)\mathbbm{1}_{\{\theta>\hat\tau^{t,x}\}}\bigg].
\end{align*}
By Theorem \ref{T:v_tilde} and the flow property (see Remark \ref{R:Random_initial_condition}), we obtain that there exists $\tau_\varepsilon\in\Tc_{\theta,T}$ such that
\begin{align}
J(t,x,\theta) \ &\leq \ \E\bigg[\bigg(\int_{t}^{\theta} f(s,X^{t,x}_s)ds + \E\bigg[\int_{\theta}^{\tau_\varepsilon} f\big(s,X^{\theta,X_\theta^{t,x}}_s\big)ds + g\big(X^{\theta,X_\theta^{t,x}}_{\tau_\varepsilon}\big) \bigg|\Fc_\theta\bigg]\bigg)\mathbbm{1}_{\{\theta\leq\hat\tau^{t,x}\}}\bigg] + \notag \\
&\quad \ + \varepsilon\,\P(\theta\leq\hat\tau^{t,x}) - \delta_\theta + \E\bigg[\bigg(\int_{t}^{\theta} f(s,X^{t,x}_s)ds + g(X^{t,x}_{\theta})\bigg)\mathbbm{1}_{\{\theta>\hat\tau^{t,x}\}}\bigg] \notag \\
&= \ \E\bigg[\bigg(\int_{t}^{\tau_\varepsilon} f(s,X^{t,x}_s)ds + g(X^{t,x}_{\tau_\varepsilon})\bigg)\mathbbm{1}_{\{\theta\leq\hat\tau^{t,x}\}}\bigg] + \varepsilon\,\P(\theta\leq\hat\tau^{t,x}) - \delta_\theta \label{Proof_OptStoppStep1} \\
&\quad \ + \E\bigg[\bigg(\int_{t}^{\theta} f(s,X^{t,x}_s)ds + g(X^{t,x}_{\theta})\bigg)\mathbbm{1}_{\{\theta>\hat\tau^{t,x}\}}\bigg]. \notag
\end{align}
Let $\check\tau:=\tau_\varepsilon\mathbbm{1}_{\{\theta\leq\hat\tau^{t,x}\}}+\theta\mathbbm{1}_{\{\theta>\hat\tau^{t,x}\}}$. Notice that $\check\tau\in\Tc_{\theta,T}$. Indeed, by definition $\theta\leq\check\tau\leq T$, $\P$-a.s.. Moreover, for every $s\in[0,T]$,
\[
\{\check\tau\leq s\} \ = \ \big(\{\tau_\varepsilon\leq s\}\cap\{\theta\leq\hat\tau^{t,x}\}\big)\cup\big(\{\theta\leq s\}\cap\{\theta>\hat\tau^{t,x}\}\big).
\]
Since $\tau_\varepsilon\geq\theta$, we have that $\{\tau_\varepsilon\leq s\}=\{\tau_\varepsilon\leq s\}\cap\{\theta\leq s\}$. Furthermore, both sets $\{\theta\leq\hat\tau^{t,x}\}$ and $\{\theta>\hat\tau^{t,x}\}$ belong to $\Fc_\theta$. From these facts it follows that $\{\check\tau\leq s\}\in\Fc_s$. Now, using $\check\tau$ in \eqref{Proof_OptStoppStep1} we obtain (bounding from above $\P(\theta\leq\hat\tau^{t,x})$ by $1$)
\[
J(t,x,\theta) \ \leq \ \E\bigg[\int_{t}^{\check\tau} f(s,X^{t,x}_s)ds + g(X^{t,x}_{\check\tau})\bigg] + \varepsilon - \delta_\theta \ = \ J(t,x,\check\tau) + \varepsilon - \delta_\theta \ \leq \ v(t,x) + \varepsilon - \delta_\theta.
\]
From the arbitrariness of $\varepsilon$, we deduce that $J(t,x,\theta)\leq v(t,x)-\delta_\theta$, from which the claim follows.

\vspace{2mm}

\noindent\emph{Proof of 2)}. We have
\[
J(t,x,\hat\tau^{t,x}) \ = \ \E\bigg[\int_{t}^{\hat\tau^{t,x}} f(s,X^{t,x}_s)ds + g(X^{t,x}_{\hat\tau^{t,x}})\bigg] \ = \ \E\bigg[\int_{t}^{\hat\tau^{t,x}} f(s,X^{t,x}_s)ds + v(\hat\tau^{t,x},X^{t,x}_{\hat\tau^{t,x}})\bigg],
\]
where the second inequality follows from the definition \eqref{OptStoppTime} of $\hat\tau^{t,x}$. By Theorem \ref{T:v_tilde} and the flow property (see Remark \ref{R:Random_initial_condition}), for every $\theta\in\Tc_{\hat\tau^{t,x},T}$ we get
\begin{align*}
J(t,x,\hat\tau^{t,x}) \ &\geq \ \E\bigg[\int_{t}^{\hat\tau^{t,x}} f(s,X^{t,x}_s)ds + \E\bigg[\int_{\hat\tau^{t,x}}^{\theta} f\big(s,X^{\hat\tau^{t,x},X_{\hat\tau^{t,x}}^{t,x}}_s\big)ds + g\big(X^{\hat\tau^{t,x},X_{\hat\tau^{t,x}}^{t,x}}_{\theta}\big) \bigg|\Fc_{\hat\tau^{t,x}}\bigg]\bigg] \\
&= \ \E\bigg[\int_t^\theta f(s,X_s^{t,x})ds + g(X_\theta^{t,x})\bigg] \ = \ J(t,x,\theta),
\end{align*}
which shows the validity of item 2).
\end{proof}

\begin{Remark}[Snell envelope]\label{R:Snell}
Theorem \ref{T:Optimal} can also be proven using the martingale approach to optimal stopping. Specifically, given $(t,x)\in[0,T]\times\R^d$, let us introduce the gain process
\[
    G^{t,x}_s \ \coloneqq \ \int_{t}^{s}f\big(r,X^{t,x}_r\big)dr \ + \ g\big(X^{t,x}_s\big),\qquad s\in[t,T].
\]
From the assumptions on the coefficients $b$, $\sigma$, $f$, $g$ it follows that $\E[\sup_{t\leq s\leq T}|G^{t,x}_s|]<+\infty$. We can then define the process
\begin{equation}\label{Snell}
    \Sc^{t,x}_s \ \coloneqq \ \esssup_{\tau\in\Tc_{s,T}}\E\big[G^{t,x}_\tau\big|\Fc_s\big],\qquad s\in[t,T].
\end{equation}
By \cite[Theorem 2.2]{PeskirShiryaev}, we know that $\Sc^{t,x}$ is the Snell envelope of $G^{t,x}$, and the smallest optimal stopping time is given by
\[
    \hat{\theta}^{t,x} \ \coloneqq \ \inf\big\{s\in[t,T]\ \big| \ \Sc^{t,x}_s = G^{t,x}_s\big\}.
\]
Thus, Theorem \ref{T:Optimal} holds if we prove that $\hat{\theta}^{t,x}$ coincides with $\hat\tau^{t,x}$ in \eqref{OptStoppTime}. This, in turn, follows from the fact that the Snell envelope $\Sc^{t,x}$ is equal to the process
\[
    \bigg(v(s,X^{t,x}_s) + \int_{t}^{s} f\big(r,X^{t,x}_r\big)dr\bigg)_{s\in[t,T]}.
\]
To see this, notice that from \eqref{Snell} we obtain
\begin{align*}
    \Sc^{t,x}_s \ &= \ \esssup_{\tau\in\Tc_{s,T}}\E\bigg[\int_{t}^{s}f(r,X^{t,x}_r)dr + \int_{s}^{\tau}f(r,X^{t,x}_r)dr + g(X^{t,x}_\tau)\bigg|\Fc_s\bigg] \\
    &= \ \int_{t}^{s}f(r,X^{t,x}_r)dr + \esssup_{\tau\in\Tc_{s,T}}\E\bigg[\int_{s}^{\tau}f(r,X^{t,x}_r)dr + g(X^{t,x}_\tau)\bigg|\Fc_s\bigg].
\end{align*}
The claim follows if we prove that
\begin{equation}\label{Vtilde = esssup_Claim}
    \esssup_{\tau\in\Tc_{s,T}}\E\bigg[\int_{s}^{\tau}f(r,X^{t,x}_r)dr \ + \ g(X^{t,x}_\tau)\bigg|\Fc_s\bigg] \ = \ v(s,X^{t,x}_s).
\end{equation}
Recalling Theorem \ref{T:v_tilde}, we have
\[
    v(s,X^{t,x}_s) \ = \ \esssup_{\tau\in\Tc_{s,T}}\E\bigg[\int_{s}^{\tau}f\big(r,X^{s,X^{t,x}_s}_r\big)dr + g\big(X^{s,X^{t,x}_s}_\tau\big)\bigg|\Fc_s\bigg].
\]
From the flow property (see Remark \ref{R:Random_initial_condition}), we have that $(X_r^{s,X_s^{t,x}})_{r\in[s,T]}$ and $(X_r^{s,x})_{r\in[s,T]}$ are $\P$-indistinguishable, therefore \eqref{Vtilde = esssup_Claim} follows. This proves the characterization of the Snell envelope $\Sc^{t,x}$, and, consequently, the fact that the stopping times $\hat\theta^{t,x}$ and $\hat\tau^{t,x}$ coincide.
\end{Remark}

\noindent\textbf{Dynamic programming principle.} We now exploit Theorem \ref{T:v_tilde} and Corollary \ref {C:v=v_tilde} in order to prove rigorously the dynamic programming principle for $v$.
\begin{Theorem}\label{T:DPP}
    Let $t\in[0,T]$ and $x\in\R^d$. Then, the following dynamic programming principle for $v$ holds: for every $\tau'\in\Tc_{t,T}$,
    \begin{equation}
        v(t,x) \ = \ \sup_{\tau\in\Tc_{t,T}} \E\bigg[\int_{t}^{\tau\land\tau'} f(s,X^{t,x}_s)ds + g(X^{t,x}_\tau)\mathbbm{1}_{\{\tau<\tau'\}} + v(\tau',X^{t,x}_{\tau'})\mathbbm{1}_{\{\tau\geq\tau'\}}\bigg]. \label{DPP MKV}
    \end{equation}
\end{Theorem}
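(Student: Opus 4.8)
Write $\Phi(\tau')$ for the right-hand side of \eqref{DPP MKV}. The plan is to establish the two inequalities $v(t,x)\leq\Phi(\tau')$ and $v(t,x)\geq\Phi(\tau')$ separately, relying only on Theorem~\ref{T:v_tilde}, Corollary~\ref{C:v=v_tilde} (which allows replacing $\tilde v$ by $v$ whenever needed), the flow property of Remark~\ref{R:Random_initial_condition}, and the standard facts that, for stopping times $\tau,\tau'$, one has $\{\tau<\tau'\},\{\tau\geq\tau'\}\in\Fc_{\tau\land\tau'}\subseteq\Fc_{\tau'}$ and that $\int_t^{\tau\land\tau'}f(s,X^{t,x}_s)\,ds$ is $\Fc_{\tau'}$-measurable. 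No approximation beyond the one already contained in Theorem~\ref{T:v_tilde} is needed.

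For the inequality $v(t,x)\leq\Phi(\tau')$ I would fix an arbitrary $\tau\in\Tc_{t,T}$ and note that $\tau\lor\tau'\in\Tc_{\tau',T}$. Splitting the integral in $J(t,x,\tau)$ at $\tau\land\tau'$ and separating the events $\{\tau<\tau'\}$ (on which $\tau\land\tau'=\tau$) and $\{\tau\geq\tau'\}$ (on which $\tau\land\tau'=\tau'$ and $\tau\lor\tau'=\tau$), then conditioning the remaining tail term on $\Fc_{\tau'}$, one obtains
\[
J(t,x,\tau)=\E\bigg[\int_t^{\tau\land\tau'} f(s,X^{t,x}_s)\,ds+g(X^{t,x}_\tau)\mathbbm{1}_{\{\tau<\tau'\}}+\mathbbm{1}_{\{\tau\geq\tau'\}}\,\E\Big[\int_{\tau'}^{\tau\lor\tau'} f(s,X^{t,x}_s)\,ds+g(X^{t,x}_{\tau\lor\tau'})\,\Big|\,\Fc_{\tau'}\Big]\bigg],
\]
where passing from $\tau$ to $\tau\lor\tau'$ inside the conditional expectation is legitimate since $\{\tau\geq\tau'\}\in\Fc_{\tau'}$. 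By the flow property the processes $(X^{t,x}_s)_{s\geq\tau'}$ and $(X^{\tau',X^{t,x}_{\tau'}}_s)_{s\geq\tau'}$ are indistinguishable, so Theorem~\ref{T:v_tilde} and Corollary~\ref{C:v=v_tilde} applied with $\theta=\tau'$ and $\xi=X^{t,x}_{\tau'}\in L^2(\Fc_{\tau'};\R^d)$ bound the inner conditional expectation by $v(\tau',X^{t,x}_{\tau'})$, $\P$-a.s.; hence $J(t,x,\tau)\leq\Phi(\tau')$, and taking the supremum over $\tau$ completes this direction.

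For the reverse inequality I would fix $\tau\in\Tc_{t,T}$ and convert the essential supremum defining $v(\tau',X^{t,x}_{\tau'})$ (via Theorem~\ref{T:v_tilde} and the flow property) into an admissible competitor for the problem started at $(t,x)$. The family $\{\,\E[\int_{\tau'}^{\sigma}f(s,X^{t,x}_s)\,ds+g(X^{t,x}_\sigma)\,|\,\Fc_{\tau'}]:\sigma\in\Tc_{\tau',T}\,\}$ is directed upward — given $\sigma_1,\sigma_2\in\Tc_{\tau',T}$, pasting them on the $\Fc_{\tau'}$-measurable set where the first conditional reward dominates realizes the pointwise maximum — so there is a sequence $(\sigma_n)_n\subset\Tc_{\tau',T}$ whose conditional rewards increase $\P$-a.s.\ to $v(\tau',X^{t,x}_{\tau'})$. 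I would then set $\tau^{(n)}:=\tau\,\mathbbm{1}_{\{\tau<\tau'\}}+\sigma_n\,\mathbbm{1}_{\{\tau\geq\tau'\}}$, verify that $\tau^{(n)}\in\Tc_{t,T}$ using $\{\tau<\tau'\},\{\tau\geq\tau'\}\in\Fc_{\tau\land\tau'}$ together with $\sigma_n\geq\tau'$, and split $J(t,x,\tau^{(n)})$ exactly as above to get
\[
J(t,x,\tau^{(n)})=\E\bigg[\int_t^{\tau\land\tau'} f(s,X^{t,x}_s)\,ds+g(X^{t,x}_\tau)\mathbbm{1}_{\{\tau<\tau'\}}+\mathbbm{1}_{\{\tau\geq\tau'\}}\,\E\Big[\int_{\tau'}^{\sigma_n} f(s,X^{t,x}_s)\,ds+g(X^{t,x}_{\sigma_n})\,\Big|\,\Fc_{\tau'}\Big]\bigg]\leq v(t,x).
\]
Letting $n\to\infty$ by the (conditional) monotone convergence theorem — the conditional expectations appearing in the integrand are bounded below by $-\E[\int_0^T|f(s,X^{t,x}_s)|\,ds+\sup_{0\leq s\leq T}|g(X^{t,x}_s)|\,|\,\Fc_{\tau'}]\in L^1$, by Assumption~(\nameref{A1}) and the moment estimate on $X^{t,x}$ — yields $\E[\int_t^{\tau\land\tau'}f(s,X^{t,x}_s)\,ds+g(X^{t,x}_\tau)\mathbbm{1}_{\{\tau<\tau'\}}+v(\tau',X^{t,x}_{\tau'})\mathbbm{1}_{\{\tau\geq\tau'\}}]\leq v(t,x)$, and the supremum over $\tau$ gives $\Phi(\tau')\leq v(t,x)$.

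The main obstacle is the second inequality. The first one is essentially immediate from Theorem~\ref{T:v_tilde}, whereas the second forces one to turn the essential supremum $v(\tau',X^{t,x}_{\tau'})$ into a genuine admissible stopping time at $(t,x)$: the delicate points are therefore (i) the upward-directedness of the family of conditional rewards, which is what makes an increasing approximating sequence — and hence monotone convergence — available, and (ii) checking that the pasted random time $\tau^{(n)}$ is an $\F$-stopping time with values in $[t,T]$. Both rest on the standard but somewhat technical measurability properties of comparison events of stopping times recalled at the outset; granted these, everything else is routine bookkeeping with conditional expectations.
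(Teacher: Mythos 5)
Your proof is correct and follows essentially the same route as the paper: the inequality $v(t,x)\leq\Phi(\tau')$ is obtained by splitting $J(t,x,\tau)$ at $\tau\land\tau'$ and bounding the conditional continuation reward by $v(\tau',X^{t,x}_{\tau'})$ via Theorem~\ref{T:v_tilde} and the flow property, and the reverse inequality by pasting a near-optimal continuation after $\tau'$ onto $\tau$ on the event $\{\tau\geq\tau'\}\in\Fc_{\tau'}$. The only (harmless) difference is in how the essential supremum defining $v(\tau',X^{t,x}_{\tau'})$ is realized: the paper selects a single $\varepsilon$-optimal $\tilde\tau_\varepsilon\in\Tc_{\tau',T}$ and concludes up to $2\varepsilon$, whereas you extract an increasing sequence $(\sigma_n)_n$ via upward-directedness and pass to the limit by monotone convergence --- if anything, this makes explicit the lattice argument that the paper's selection of $\tilde\tau_\varepsilon$ leaves implicit.
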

\begin{proof}
    Set
    \begin{equation}\label{LambdaProofStep1}
        \Lambda(t,x) \coloneqq \sup_{\tau\in \Tc_{t,T}} \E\bigg[\int_{t}^{\tau\land \tau'} f(s,X^{t,x}_s)ds + g(X^{t,x}_\tau)\mathbbm{1}_{\{\tau<\tau'\}} + v(\tau',X^{t,x}_{\tau'})\mathbbm{1}_{\{\tau\geq\tau'\}}\bigg]. 
    \end{equation}
    \textsc{Step} 1: $v(t,x)\leq\Lambda(t,x)$. For every $\tau\in\Tc_{t,T}$, we have
    \[
        \Lambda(t,x) \ \geq \ \E\bigg[\int_{t}^{\tau\land\tau'} f(s,X^{t,x}_s)ds + g(X^{t,x}_\tau)\mathbbm{1}_{\{\tau<\tau'\}} + v(\tau',X^{t,x}_{\tau'})\mathbbm{1}_{\{\tau\geq\tau'\}}\bigg].
    \]
    By Theorem \ref{T:v_tilde} and Corollary \ref{C:v=v_tilde}, we have (noting that $\tau\vee\tau'\in\Tc_{\tau',T}$)
    \begin{align*}
        v(\tau',X^{t,x}_{\tau'}) \ &\geq\ \E\bigg[\int_{\tau'}^{\tau\vee \tau'} f\big(s,X^{\tau',X^{t,x}_{\tau'}}_s\big)ds\ + \ g\big(X^{\tau',X^{t,x}_{\tau'}}_{\tau\vee \tau'}\big)\bigg|\Fc_{\tau'}\bigg] \\
        &= \ \E\bigg[\int_{\tau'}^{\tau\vee \tau'} f(s,X^{t,x}_s)ds + g(X^{t,x}_{\tau\vee \tau'})\bigg|\Fc_{\tau'}\bigg],
    \end{align*}
    where the last equality follows from the flow property (see Remark \ref{R:Random_initial_condition}). Plugging the above inequality into \eqref{LambdaProofStep1} and using that $\{\tau\geq\tau'\}\in\Fc_{\tau'}$, we deduce that $\Lambda(t,x)\geq J(t,x,\tau)$. From the arbitrariness of $\tau\in\Tc_{t,T}$, we get $\Lambda(t,x)\geq v(t,x)$.

    \vspace{2mm}
    
    \noindent\textsc{Step} 2: $v(t,x)\geq\Lambda(t,x)$. For every $\varepsilon>0$ there exists $\tau_\varepsilon\in\Tc_{t,T}$ such that
    \[
        \Lambda(t,x) \ \leq \ \E\bigg[\int_{t}^{\tau_\varepsilon\land \tau'} f(s,X^{t,x}_s)ds + g(X^{t,x}_{\tau_\varepsilon})\mathbbm{1}_{\{\tau_\varepsilon<\tau'\}} + v(\tau',X^{t,x}_{\tau'})\mathbbm{1}_{\{\tau_\varepsilon\geq \tau'\}}\bigg] + \varepsilon.
    \]
    By Theorem \ref{T:v_tilde} and Corollary \ref{C:v=v_tilde}, there exists $\tilde\tau_\varepsilon\in\Tc_{\tau',T}$ such that
    \begin{align*}
        v(\tau',X^{t,x}_{\tau'}) \ &\leq \ \E\bigg[\int_{\tau'}^{\tilde\tau_\varepsilon} f(s,X^{\tau',X^{t,x}_{\tau'}}_s)ds + g(X^{\tau',X^{t,x}_{\tau'}}_{\tilde\tau_\varepsilon})\bigg|\Fc_{\tau'}\bigg] + \varepsilon \\
        &= \ \E\bigg[\int_{\tau'}^{\tilde\tau_\varepsilon} f(s,X^{t,x}_s)ds + g(X^{t,x}_{\tilde\tau_\varepsilon})\bigg|\Fc_{\tau'}\bigg] + \varepsilon,
    \end{align*}
    where the last equality follows from the flow property (see Remark \ref{R:Random_initial_condition}). This implies that
    \begin{align*}
        &\Lambda(t,x) \ \leq \\
        &\leq \ \E\bigg[\int_{t}^{\tau_\varepsilon\land \tau'} f(s,X^{t,x}_s)ds + g(X^{t,x}_{\tau_\varepsilon})\mathbbm{1}_{\{\tau_\varepsilon<\tau'\}} + \bigg(\int_{\tau'}^{\tilde\tau_\varepsilon} f(s,X^{t,x}_s)ds + g(X^{t,x}_{\tilde\tau_\varepsilon})\bigg)\mathbbm{1}_{\{\tau_\varepsilon\geq \tau'\}}\bigg] + 2\varepsilon \\
        &= \ \E\bigg[\int_{t}^{\hat\tau_\varepsilon\land \tau'} f(s,X^{t,x}_s)ds + g(X^{t,x}_{\hat\tau_\varepsilon})\mathbbm{1}_{\{\hat\tau_\varepsilon<\tau'\}} + \mathbbm{1}_{\{\hat\tau_\varepsilon \geq \tau'\}}\bigg(\int_{\tau'}^{\hat\tau_\varepsilon} f(s,X^{t,x}_s)ds + g(X^{t,x}_{\hat\tau_\varepsilon})\bigg)\bigg] + 2\varepsilon,
    \end{align*}
   where $\hat\tau_\varepsilon \coloneqq \tau_\varepsilon\mathbbm{1}_{\{\tau_\varepsilon<\tau'\}}+\tilde\tau_\varepsilon\mathbbm{1}_{\{\tau_\varepsilon\geq \tau'\}}$. This shows that $\Lambda(t,x)\leq J(t,x,\hat\tau_\varepsilon)+2\varepsilon\leq v(t,x)+2\varepsilon$. The claim follows from the arbitrariness of $\varepsilon$.
\end{proof}

\noindent\textbf{Hamilton-Jacobi-Bellman equation.} Thanks to the dynamic programming principle (Theorem \ref{T:DPP}) we now show that the value function $v$ is a viscosity solution of the following Hamilton-Jacobi-Bellman equation:
\begin{equation}\label{HJB}
    \begin{cases}
    \vspace{2mm}\max\big\{\partial_t u + \Lc_t u + f, g - u\big\}=0, &\qquad \text{on }[0,T)\times\R^d, \\
    u(T,x) = g(x), &\qquad x\in\R^d,
\end{cases}
\end{equation}
where
\[
    \Lc_t u(t,x) \ = \ \big\langle b(t,x),\partial_x u(t,x)\big\rangle + \frac{1}{2}\text{tr}\big(\sigma\sigma^T (t,x)\partial^2_x u(t,x)\big).
\]
We now recall the usual definition of viscosity solution for equation \eqref{HJB}.
\begin{Definition}
    A continuous map $u\colon[0,T]\times\R^d\longrightarrow\R$ is a viscosity subsolution (resp. supersolution) to \eqref{HJB} if:
    \begin{itemize}
        \item $u(T,x)\leq \,(\text{resp.}\geq)\ g(x)$, $x\in\R^d$;
        \item given $(t,x)\in[0,T)\times\R^d$ and $\varphi\in\Cc^{1,2}([0,T]\times\R^d)$ such that $u-\varphi$ has a local maximum (resp. minimum) at $(t,x)$ with value $0$, it holds that 
        \[
            \max\big\{\partial_t \varphi+ \Lc_t \varphi + f, g - \varphi\big\}\geq \,(\text{resp.} \leq)\ 0.
        \]
    \end{itemize}
    Moreover, $u$ is a viscosity solution to \eqref{HJB} if it is both a viscosity subsolution and a viscosity supersolution.
\end{Definition}

\begin{Theorem}\label{T: V tilde solution HJB}
    In addition to \textnormal{Assumption (\nameref{A1})} we suppose that $b,\sigma, f$ are continuous on $[0,T]\times\R^d$. Then, the value function $v$ is a viscosity solution of equation \eqref{HJB}.
\end{Theorem}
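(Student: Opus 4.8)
\emph{Plan.} I would establish separately that $v$ is a viscosity subsolution and a viscosity supersolution of \eqref{HJB}, using only the dynamic programming principle (Theorem \ref{T:DPP}), the continuity of $v$ (Proposition \ref{P:v}), and the elementary bound $v\geq g$ (take $\tau\equiv t$ in the definition of $v$). The terminal condition is immediate: $\Tc_{T,T}=\{T\}$, hence $v(T,x)=J(T,x,T)=g(x)$. For the interior inequalities, fix $(t_0,x_0)\in[0,T)\times\R^d$; the recurring device is the exit time $\tau_r:=\inf\{s\geq t_0:|X^{t_0,x_0}_s-x_0|\geq r\}$, which satisfies $\tau_r>t_0$ $\P$-a.s.\ by path continuity, so that $\E[\tau_r\wedge(t_0+h)-t_0]>0$ for every $h>0$. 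Note also that the extra hypothesis that $b,\sigma,f$ are continuous guarantees that $(t,x)\mapsto\partial_t\varphi+\Lc_t\varphi+f$ is continuous for $\varphi\in\Cc^{1,2}$, so that a strict inequality for this expression at a point propagates to a neighborhood.

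\emph{Supersolution.} Suppose $\varphi\in\Cc^{1,2}$ and $v-\varphi$ has a local minimum equal to $0$ at $(t_0,x_0)$, so that $v(t_0,x_0)=\varphi(t_0,x_0)$ and $v\geq\varphi$ on a neighborhood $[t_0,t_0+h_0]\times\overline{B_r(x_0)}$ with $h_0<T-t_0$. The inequality $g-\varphi\leq0$ at $(t_0,x_0)$ is just $g\leq v$. If, for contradiction, $\partial_t\varphi+\Lc_t\varphi+f>0$ at $(t_0,x_0)$, shrink the neighborhood so that this quantity is $\geq\eta>0$ on it, fix $h\in(0,h_0]$, and set $\tau':=\tau_r\wedge(t_0+h)\in\Tc_{t_0,T}$. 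Since the supremum in Theorem \ref{T:DPP} applied with this $\tau'$ is $\geq$ its value at $\tau=T\geq\tau'$, we get $v(t_0,x_0)\geq\E[\int_{t_0}^{\tau'}f(s,X^{t_0,x_0}_s)\,ds+v(\tau',X^{t_0,x_0}_{\tau'})]$; bounding $v\geq\varphi$ at $(\tau',X^{t_0,x_0}_{\tau'})$ and applying It\^o's formula (the stochastic integral up to $\tau'$ is a genuine martingale, since $\varphi$ is smooth and the path stays in $\overline{B_r(x_0)}$) yields $v(t_0,x_0)\geq\varphi(t_0,x_0)+\eta\,\E[\tau'-t_0]>v(t_0,x_0)$, a contradiction.

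\emph{Subsolution.} Suppose $v-\varphi$ has a local maximum equal to $0$ at $(t_0,x_0)$. If $g(x_0)=v(t_0,x_0)$ then $g-\varphi=0$ there and we are done, so assume $v(t_0,x_0)>g(x_0)$. By continuity of $v$ and $g$ there are $\iota>0$ and a neighborhood $[t_0,t_0+h_0]\times\overline{B_r(x_0)}$, $h_0<T-t_0$, on which simultaneously $v\leq\varphi$ and $v\geq g+\iota$. If, for contradiction, $\partial_t\varphi+\Lc_t\varphi+f<0$ at $(t_0,x_0)$, shrink the neighborhood so that it is $\leq-\eta<0$ on it; fix $h\in(0,h_0\wedge1]$, put $\tau':=\tau_r\wedge(t_0+h)$, and let $\tau_\varepsilon\in\Tc_{t_0,T}$ be $\varepsilon$-optimal in Theorem \ref{T:DPP} for this $\tau'$. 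With $\hat\tau:=\tau_\varepsilon\wedge\tau'\leq\tau_r$, use $g(X^{t_0,x_0}_{\tau_\varepsilon})\leq\varphi(\tau_\varepsilon,X^{t_0,x_0}_{\tau_\varepsilon})-\iota$ on $\{\tau_\varepsilon<\tau'\}$ and $v(\tau',X^{t_0,x_0}_{\tau'})\leq\varphi(\tau',X^{t_0,x_0}_{\tau'})$ on $\{\tau_\varepsilon\geq\tau'\}$, together with It\^o's formula, to get
\[
v(t_0,x_0)\ \leq\ \varphi(t_0,x_0)-\eta\,\E[\hat\tau-t_0]-\iota\,\P(\tau_\varepsilon<\tau')+\varepsilon .
\]
Since $v(t_0,x_0)=\varphi(t_0,x_0)$ and $\tau'-t_0\leq h\leq1$, splitting according to $\{\tau_\varepsilon<\tau'\}$ (where $\P(\tau_\varepsilon<\tau')\geq\frac{1}{h}\E[(\tau'-t_0)\mathbbm{1}_{\{\tau_\varepsilon<\tau'\}}]$) and its complement (where $\hat\tau=\tau'$) gives $\E[\hat\tau-t_0]+\P(\tau_\varepsilon<\tau')\geq\frac{1}{h}\E[\tau'-t_0]>0$; hence $\min(\eta,\iota)\frac{1}{h}\E[\tau'-t_0]\leq\varepsilon$ for all $\varepsilon>0$, which is absurd.

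\emph{Main obstacle.} The delicate point is the subsolution inequality: the supremum in the dynamic programming principle can be approached by stopping times that are effectively degenerate at $t_0$ (this encodes the option of stopping immediately through the term $g(X_\tau)\mathbbm{1}_{\{\tau<\tau'\}}$), so one cannot argue directly from the DPP. The remedy is to exploit the strict inequality $v(t_0,x_0)>g(x_0)$, which survives in a neighborhood and produces the extra negative contribution $-\iota\,\P(\tau_\varepsilon<\tau')$; combined with $-\eta\,\E[\hat\tau-t_0]$, this forces a strictly positive lower bound independent of $\varepsilon$. The terminal condition, the supersolution property, and the handling of the stochastic integrals are all routine.
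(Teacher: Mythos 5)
Your proof is correct in substance but follows a genuinely different route from the paper's. The paper argues directly: it builds the auxiliary horizon $\tau'_h$ out of the level sets of $v-\varphi$ (and, for the subsolution case, of $g-\varphi(t,x)$), applies the dynamic programming principle with an $h^2$-optimal stopping time, divides by $h$ and passes to the limit $h\to0^+$, invoking almost sure convergence of the normalized integrals. You instead argue by contradiction: you localize via the exit time $\tau_r$ from a ball, use the continuity of $b,\sigma,f$ to turn the contradiction hypothesis into a uniform bound $\pm\eta$ on a neighborhood, keep $h$ fixed, and send the optimization error $\varepsilon\to0$. Your treatment of the subsolution case is in fact somewhat more transparent on the delicate point: the paper must argue that the early-stopping event $\{\tau_h<\tau'_h\}$ becomes asymptotically negligible as $h\to0^+$, whereas you quantify its cost explicitly through the term $-\iota\,\P(\tau_\varepsilon<\tau')$, which sits on the same side of the inequality as $-\eta\,\E[\hat\tau-t_0]$ and is killed by letting $\varepsilon\to0$ at fixed $h$. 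The price you pay is the contradiction format and the need for the strict separation $v\geq g+\iota$ on a neighborhood, which the paper encodes instead in the definition of its stopping region.

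One bookkeeping slip, which does not affect the conclusion: the inequality $\E[\hat\tau-t_0]+\P(\tau_\varepsilon<\tau')\geq\frac{1}{h}\E[\tau'-t_0]$ is false as stated (take $\tau_\varepsilon\geq\tau'$ a.s.: the left side equals $\E[\tau'-t_0]$, which is smaller than the right side when $h<1$). The correct version of your splitting is $\E[\tau'-t_0]\leq h\,\P(\tau_\varepsilon<\tau')+\E[\hat\tau-t_0]$, since $\tau'-t_0\leq h$ on $\{\tau_\varepsilon<\tau'\}$ and $\tau'=\hat\tau$ on the complement. Combined with $\eta\,\E[\hat\tau-t_0]+\iota\,\P(\tau_\varepsilon<\tau')\leq\varepsilon$, this still gives $\E[\tau'-t_0]\leq\max(h/\iota,1/\eta)\,\varepsilon$ for every $\varepsilon>0$, contradicting $\E[\tau'-t_0]>0$; so the argument closes as intended.
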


\begin{proof} 
    By Proposition \ref{P:v} we know that $v$ is a continuous function. Furthermore, by definition we have that $v(T,x)=g(x)$, $\forall\,x\in\R^d$. Let us then prove the viscosity properties of $v$ on $[0,T)\times\R^d$.

    \vspace{2mm}
    
    \noindent\emph{Supersolution property.} Consider $(t,x)\in[0,T)\times\R^d$ and $\varphi\in\Cc^{1,2}([0,T]\times\R^d)$, with $v-\varphi$ attaining a local minimum at $(t,x)$ with value $0$. Let
    \[
    \tau_h' \ := \ \inf\big\{s\in[t,T] \, \big| \, v(s,X_s^{t,x})<\varphi(s,X_s^{t,x})\big\}\wedge(t+h),
    \]
    with $\inf\emptyset:=T$ and $h\in(0,T-t)$. Using the dynamic programming principle \eqref{DPP MKV}, we get
    \[
        v(t,x) \ \geq \ \E\bigg[\int_{t}^{\tau\land\tau'_h} f(s,X^{t,x}_s)ds + g(X^{t,x}_\tau)\mathbbm{1}_{\{\tau<\tau'_h\}} + v(\tau'_h,X^{t,x}_{\tau'_h})\mathbbm{1}_{\{\tau\geq\tau'_h\}}\bigg],\quad\text{for all }\tau\in\Tc_{t,T}.
    \]
    By $v(t,x)=\varphi(t,x)$ and $v(\tau'_h,X_{\tau'_h}^{t,x})\geq\varphi(\tau'_h,X_{\tau'_h}^{t,x})$, we obtain
    \[
        \varphi(t,x) \ \geq \ \E\bigg[\int_{t}^{\tau\land\tau'_h} f(s,X^{t,x}_s)ds \ + \ g(X^{t,x}_\tau)\mathbbm{1}_{\{\tau<\tau'_h\}} + \varphi(\tau'_h,X^{t,x}_{\tau'_h})\mathbbm{1}_{\{\tau\geq\tau'_h\}}\bigg],\quad\text{for all }\tau\in\Tc_{t,T}.
    \]
    An application of It\^o's formula to $\varphi(\tau'_h,X^{t,x}_{\tau'_h})$, yields, for all $\tau\in\Tc_{t,T}$,
    \begin{align*}
        \varphi(t,x) \ &\geq \ \E\bigg[\int_{t}^{\tau\land\tau'_h} f(s,X^{t,x}_s)ds + g(X^{t,x}_\tau)\mathbbm{1}_{\{\tau<\tau'_h\}} \\
        &\quad \ + \bigg(\varphi(t,x) + \int_t^{\tau'_h} \big(\partial_t v + \Lc_t\varphi\big)(s,X^{t,x}_s)ds\bigg)\mathbbm{1}_{\{\tau\geq\tau'_h\}}\bigg].
    \end{align*}
    Now, choosing $\tau=t$ we get
    \begin{equation}\label{ProofStep1.1}
        \varphi(t,x) \ \geq \ \E[g(X^{t,x}_t)] \ = \ g(x).
    \end{equation}
    Choosing instead $\tau=\tau'_h$, we find
    \[
        0 \ \geq \ \E\bigg[\int_t^{\tau'_h}\big(\partial_t\varphi+\Lc_t\varphi + f\big)(s,X^{t,x}_s)ds\bigg].
    \]
    Multiplying by $h^{-1}$ and sending $h\to 0^+$, we obtain
    \begin{equation}\label{ProofStep1.2}
        0\ \geq \ \big(\partial_t\varphi+\Lc_t\varphi + f\big)(t,x).
    \end{equation}
    In conclusion, by \eqref{ProofStep1.1} and \eqref{ProofStep1.2}, we get
   \[
        \max\big\{(\partial_t \varphi + \Lc_t \varphi + f)(t,x), g(x) - \varphi(t,x)\big\} \ \leq \ 0.
    \]
    \emph{Subsolution property.} Consider $(t,x)\in[0,T)\times\R^d$ and $\varphi\in\Cc^{1,2}([0,T]\times\R^d)$, with $v-\varphi$ attaining a local maximum at $(t,x)$ with value $0$. Our aim is to show that
    \[
        \max\big\{(\partial_t \varphi + \Lc_t \varphi + f)(t,x), g(x) - \varphi(t,x)\big\} \ \geq \ 0.
    \]
    If $g(x) - \varphi(t,x)\geq 0$, then there is nothing to prove. Let us then suppose that $g(x) - \varphi(t,x)< 0$. Set
    \[
    \tau_h' \ := \ \inf\big\{s\in[t,T] \, \big| \, v(s,X_s^{t,x})>\varphi(s,X_s^{t,x}) \ \wedge \ g(X_s^{t,x})>\varphi(t,x)\big\}\wedge(t+h),
    \]
    with $\inf\emptyset:=T$ and $h\in(0,T-t)$. Using the dynamic programming principle \eqref{DPP MKV}, we deduce the existence of $\tau_h\in\Tc_{t,T}$ such that
    \[
        v(t,x) \ \leq \ \E\bigg[\int_{t}^{\tau_h\land \tau'_h} f(s,X^{t,x}_s)ds  +  g(X^{t,x}_{\tau_h})\mathbbm{1}_{\{{\tau_h}<\tau'_h\}} +  v(\tau'_h,X^{t,x}_{\tau'_h})\mathbbm{1}_{\{{\tau_h}\geq \tau'_h\}}\bigg] + h^2.
    \]
    By $v(t,x)=\varphi(t,x)$ and $v(\tau'_h,X^{t,x}_{\tau'_h})\leq\varphi(\tau'_h,X^{t,x}_{\tau'_h})$, we obtain
    \begin{align*}
        \varphi(t,x) \ \leq \ \E\bigg[\int_{t}^{\tau_h\land \tau'_h} f(s,X^{t,x}_s)ds  +  g(X^{t,x}_{\tau_h})\mathbbm{1}_{\{{\tau_h}<\tau'_h\}} +  \varphi(\tau'_h,X^{t,x}_{\tau'_h})\mathbbm{1}_{\{{\tau_h}\geq \tau'_h\}}\bigg] + h^2.
    \end{align*}
    An application of It\^o's formula to $\varphi(\tau'_h,X^{t,x}_{\tau'_h})$, yields, for all $\tau\in\Tc_{t,T}$,
   \begin{align*}
        \varphi(t,x) \ &\leq \ \E\bigg[\int_{t}^{\tau_h\land\tau'_h} f(s,X^{t,x}_s)ds + g(X^{t,x}_{\tau_h})\mathbbm{1}_{\{\tau_h<\tau'_h\}} \\
        &\quad \ + \bigg(\varphi(t,x) + \int_t^{\tau'_h} \big(\partial_t v + \Lc_t\varphi\big)(s,X^{t,x}_s)ds\bigg)\mathbbm{1}_{\{\tau_h\geq\tau'_h\}}\bigg] + h^2.
    \end{align*}
    This can be written as
    \begin{align*}
        0 \ &\leq \ \E\bigg[\int_{t}^{\tau_h\land\tau'_h} f(s,X^{t,x}_s)ds + \mathbbm{1}_{\{{\tau_h}<\tau'_h\}}\big(g(X^{t,x}_{\tau_h})-\varphi(t,x)\big)\notag\\
        &\quad \  + \mathbbm{1}_{\{{\tau_h}\geq \tau'_h\}}\int_t^{\tau'_h}\big(\partial_t\varphi + \Lc_t\varphi\big)(s,X^{t,x}_s)ds\bigg] + h^2\notag\\
        &\leq \ \E\bigg[\mathbbm{1}_{\{{\tau_h}<\tau'_h\}}\int_{t}^{\tau_h} f(s,X^{t,x}_s)ds + \mathbbm{1}_{\{{\tau_h}\geq \tau'_h\}}\int_t^{\tau'_h}\big(\partial_t\varphi + \Lc_t\varphi + f\big)(s,X^{t,x}_s)ds\bigg] + h^2,
    \end{align*}
    where the latter equality follows from the fact that $g(X^{t,x}_{\tau_h})\leq\varphi(t,x)$ on $\{\tau_h<\tau'_h\}$. Multiplying by $h^{-1}$, we get
    \begin{equation}\label{ProofStep2.2}
        0 \ \leq \ \E\bigg[\mathbbm{1}_{\{{\tau_h}<\tau'_h\}}\frac{1}{h}\int_{t}^{\tau_h} f(s,X^{t,x}_s)ds + \mathbbm{1}_{\{{\tau_h}\geq \tau'_h\}}\frac{1}{h}\int_t^{\tau'_h}\big(\partial_t\varphi + \Lc_t\varphi + f\big)(s,X^{t,x}_s)ds\bigg] + h.
    \end{equation}
    Letting $h\rightarrow0^+$, we find
    \[
        \mathbbm{1}_{\{{\tau_h}\geq \tau'_h\}}\frac{1}{h}\int_t^{\tau'_h}\big(\partial_t\varphi + \Lc_t\varphi + f\big)(s,X^{t,x}_s)ds \ \overset{h\to0^+}{\longrightarrow} \ \big(\partial_t\varphi + \Lc_t\varphi + f\big)(t,x)\quad\text{a.s.}
    \]
    Moreover, we have
    \begin{align*}
        \bigg|\mathbbm{1}_{\{{\tau_h}<\tau'_h\}}\frac{1}{h}\int_{t}^{\tau_h} f(s,X^{t,x}_s)ds\bigg|\ &\leq \ \mathbbm{1}_{\{{\tau_h}<\tau'_h\}}\frac{1}{h}\int_{t}^{\tau_h} |f(s,X^{t,x}_s)|ds\\
        &\leq \ \mathbbm{1}_{\{{\tau_h}<\tau'_h\}}\max_{s\in[t,T]}|f(s,X^{t,x}_s)|\frac{\tau_h-t}{h}\\
        &\leq \ \mathbbm{1}_{\{{\tau_h}<\tau'_h\}}\max_{s\in[t,T]}|f(s,X^{t,x}_s)|\longrightarrow0\qquad\text{a.s.}\qquad\text{as }h\to0^+.
    \end{align*}
   In particular, the last inequality is due to the fact that $\tau_h-t<h$ on $\{{\tau_h}<\tau'_h\}$. Hence, letting $h\rightarrow0^+$ in \eqref{ProofStep2.2}, we conclude that
    \[
        0 \ \leq \ \big(\partial_t\varphi + \Lc_t\varphi + f\big)(t,x).
    \]
\end{proof}

\section{Approximation result for stopping times}\label{S:Approx}

Let $(\Omega,\Fc,\P)$ be a complete probability space and let $T>0$. Given $t\in[0,T]$, we consider a filtration $\mathbb H=(\Hc_s)_{s\in[t,T]}$ and a $\sigma$-algebra $\Gc$, with $\Gc$ and $\Hc_T$ being independent. Let also $\mathbb F=(\Fc_s)_{s\in[t,T]}$ be defined as
\[
\Fc_s \ = \ \Gc\vee\Hc_s, \qquad s\in[t,T].
\]
We denote by $\Tc_{t,T}$ and $\tilde\Tc_{t,T}$ the set of all $[t,T]$-valued stopping times with respect to $\mathbb F$ and $\mathbb H$, respectively. We can now state the claimed approximation result.

\begin{Theorem}\label{T:Approx}
Let $\tau \in\Tc_{t,T}$. There exists a sequence $(N_n)_n\subset\N$ and a sequence of stopping times $(\tau_n)_n\subset\Tc_{t,T}$ converging to $\tau$ almost surely and such that, for every $n$,
    \begin{equation}\label{successione tau}
        \tau_n = \sum_{i=1}^{N_n}\tau_{n,i}\mathbbm{1}_{G_{n,i}}, 
    \end{equation}
     where $\tau_{n,i}\in\tilde\Tc_{t,T}$ for all $i=1,\ldots,N_n$ and $\{G_{n,1},\ldots,G_{n,N_n}\}\subset\Gc$ is a partition of $\Omega$.
\end{Theorem}
\begin{proof}
    Since the subset of $\Tc_{t,T}$ consisting in stopping times that assume a finite number of values is dense in $\Tc_{t,T}$ (see for instance \cite[Lemma 3.3]{Baldi}), it suffices to consider stopping times $\tau\in\Tc_{t,T}$ of the form
    \begin{equation}\label{tau}
        \tau = \sum_{j=1}^M t_j\mathbbm{1}_{A_j},
    \end{equation}
    where $t_j\in[t,T]$, $A_j\in\Fc_{t_j}$ for all $j$ and $\{A_1,...,A_M\}$ is a partition of $\Omega$. We proceed in four steps.

    \vspace{2mm}
    
    \noindent\textsc{Step} 1. \emph{Fix $\tau\in\Tc_{t,T}$ as in \eqref{tau}. For every  $s\in[t,T]$ define the collection
    \[
        \Ac_s := \bigg\{A \in\Fc_s \big| \ A = \bigcup\limits_{i=1}^{N}\big(B_i\cap C_i\big),\, B_i\in\Gc,\,C_i\in\Hc_s, N\in\N\bigg\}. 
    \]
    We claim that for every $A_j\in\Fc_{t_j}$ there exists a sequence $(A_{n,j})_n\subset\Fc_{t_j}$ such that}
    \begin{equation}\label{Step1ClaimStoppTime}
        \tau_n \ \coloneqq \ \sum_{j=1}^M t_i\mathbbm{1}_{A_{n,j}} \ \longrightarrow \ \tau,\qquad\P\text{-a.s.}\qquad\text{as }n\to+\infty.
    \end{equation}
    It is easy to see that $\Ac_s$ is an algebra which generates $\Fc_s=\Gc\vee\Hc_s$. As a consequence, for every $A\in\Fc_s$ there exists $(A_n)_n\subset\Ac_s$ such that $\lim\limits_{n\to\infty}\P(A\Delta A_n) = 0$ (\cite[Theorem D, Sec. 13]{Halmos}). In particular, for every $A_j\in\Fc_{t_j}$ there exists a sequence $(A_{n,j})_n\subset\Fc_{t_j}$ such that $\lim\limits_{m\to\infty}\P(A_j\Delta A_{n,j}) = 0$, so that
    \[
        \E[|\mathbbm{1}_{A_{n,j}}-\mathbbm{1}_{A_j}|] \ = \  \E[\mathbbm{1}_{A_j\Delta A_{n,j}}] \ = \ \P(A_j\Delta A_{n,j}) \ \longrightarrow \ 0, \qquad\text{as }n\to+\infty.
    \]
    Then, up to a subsequence,
    \[
        \mathbbm{1}_{A_{n,j}} \ \longrightarrow \ \mathbbm{1}_{A_j},\qquad\P\text{-a.s.}\qquad\text{as }n\to+\infty.
    \]
    Since the latter holds for every $j\in\{1,...,M\}$, we see that \eqref{Step1ClaimStoppTime} follows.

    \vspace{2mm}
    
    \noindent\textsc{Step} 2. \emph{Without loss of generality, we can assume that for each fixed $n\in\N$ the events $A_{n,1},...,A_{n,M}$ form a partition of $\Omega$.} To prove this claim, let $n\in\N$ be fixed. From now on, we will assume, without loss of generality, that ${t\leq t_1<t_2<...<t_M\leq T}$, so that $\Ac_{t_j}\subset\Ac_{t_{j+1}}$. Thus, given
    \[
    \begin{cases}
        \Tilde{A}_{n,1} := A_{n,1}\\
        \Tilde{A}_{n,2} := A_{n,2}\setminus A_{n,1}\\
        \Tilde{A}_{n,j+1} := A_{n,j+1}\setminus\big(\cup_{k=1}^j A_{n,k}\big),&\text{for all } j=2,...,M-1
    \end{cases}
    \]
    we have that $\tilde A_{n,j}\in\Ac_{t_j}$ for every $j$. Moreover, since
    \[
        \mathbbm{1}_{\cup_{k=1}^j A_{n,k}} = \max_{1\leq k\leq j}\mathbbm{1}_{A_{n,k}},\qquad \mathbbm{1}_{\cup_{k=1}^j A_{k}} = \max_{1\leq k\leq j}\mathbbm{1}_{A_{k}}
    \]
    and, for every $k$,
    \[
        \mathbbm{1}_{A_{n,k}}\longrightarrow\mathbbm{1}_{A_k}\qquad\P\text{-a.s.}\qquad\text{as }n\to+\infty,
    \]
    by \textsc{Step} 1, it holds that 
    \begin{equation}\label{conv unions}
         \mathbbm{1}_{\cup_{k=1}^j A_{n,k}} \ \longrightarrow \ \mathbbm{1}_{\cup_{k=1}^j A_{k}}\qquad\P\text{-a.s.}\qquad\text{as }n\to+\infty.
    \end{equation}
    Then, noting that
    \[
        \mathbbm{1}_{ \tilde A_{n,j}} = \mathbbm{1}_{A_{n,j}\setminus\cup_{k=1}^{j-1}A_{n,k}}=\mathbbm{1}_{A_{n,j}}\big(1-\mathbbm{1}_{\cup_{k=1}^{j-1}A_{n,k}}\big),
    \]
    we get
    \[
         \mathbbm{1}_{ \tilde A_{n,j}} \ \longrightarrow \ \mathbbm{1}_{A_j\setminus\cup_{k=1}^{j-1}A_{k}}=\mathbbm{1}_{A_j}\qquad\P\text{-a.s.}\qquad\text{as }n\to+\infty,
    \]
    where the last equality holds because, by assumption, the events $A_j$ are disjoint. Hence
    \[
        \sum_{j=1}^M t_j\mathbbm{1}_{\tilde A_{n,j}} \ \longrightarrow \ \tau\qquad\P\text{-a.s.}\qquad\text{as }n\to+\infty.
    \]
    Now define
    \[
        \tilde A_{n,M+1} \coloneqq \bigg(\bigcup_{j=1}^M A_{n,j}\bigg)^\mathrm{c}\in\Ac_{t_M},
    \]
    so that the collection of events $(\tilde A_{n,j})_{j=1}^{M+1}$ constitutes a partition of $\Omega$. As in \eqref{conv unions}, we have that
    \[
        \mathbbm{1}_{\tilde A_{n,M+1}}\longrightarrow\mathbbm{1}_{(\cup_{j=1}^M A_{j})^\mathrm{c}} = \mathbbm{1}_{\Omega^\mathrm{c}} = \mathbbm{1}_{\emptyset} = 0,\qquad\P\text{-a.s.}\qquad\text{as }n\to+\infty,
    \]
    where the equalities hold because $(A_j)_j$ is a partition of $\Omega$. Consequently,
    \[
        \sum_{j=1}^{M} t_j\mathbbm{1}_{\tilde A_{n,j}} + t_M\mathbbm{1}_{\tilde A_{n,M+1}} \ \longrightarrow \ \tau\qquad\P\text{-a.s.}\qquad\text{as }n\to+\infty.
    \]
    These computations implies that, without loss of generality, we can assume that for each fixed $n\in\N$ the events $A_{n,1},...,A_{n,M}$ form a partition of $\Omega$. Therefore, it remains to show that $\tau_n=\sum_{j=1}^M t_j\mathbbm{1}_{A_{n,j}}$ has the desired form given by \eqref{successione tau}.
    

    \vspace{2mm}
    
    \noindent\textsc{Step} 3. \emph{Let $n\in\N$ and $j\in\{1,...,M\}$ be fixed and consider the event $A_{n,j}\in\mathcal{A}_{t_j}$ whose form is given by
    \[
        A_{n,j} = \bigcup_{i=1}^{N_{n,j}} (B_{n,j,i}\cap C_{n,j,i}),
    \]
    where $N_{n,j}\in\N$, $B_{n,j,i}\in\Gc$ and $C_{n,j,i}\in\Hc_{t_j}$ for every $i$. We claim that we can always assume the events $B_{n,j,i}$ to be disjoint with respect to the index $i$ in the decomposition of $A_{n,j}$.} To this end, let us define the following objects:
    \begin{itemize}
        \item $S_j\coloneqq\big\{\sigma = \{\sigma(1),...,\sigma(N_{n,j})\}: \sigma(i)\in\{0,1\}\,\forall i, \sigma\neq\{0,...,0\}\big\}$;
        \item $\hat{B}_{n,j,i}^k\coloneqq
        \begin{cases}
            B_{n,j,i}^c,& k=0\\
            B_{n,j,i},& k=1
        \end{cases},\qquad$ for every $i=1,...,N_{n,j}$;
        \item $\hat{C}_{n,j,i}^k\coloneqq
        \begin{cases}
            \emptyset,& k=0\\
            C_{n,j,i},& k=1
        \end{cases},\qquad$ for every $i=1,...,N_{n,j}$.
    \end{itemize}
    Firstly, we prove that the collection $\big\{\hat{B}_{n,j}^\sigma\coloneqq\bigcap_{i=1}^{N_{n,j}}\hat{B}_{n,j,i}^{\sigma(i)}\big\}_{\sigma\in S_j}$ is a partition of $\bigcup_{i=1}^{N_{n,j}} B_{n,j,i}$. By definition the events $\hat{B}_{n,j}^\sigma$ are pairwise disjoint with respect to $\sigma\in S_j$.  Indeed, if $\sigma,\,\rho\in S_j$ are such that $\sigma\neq \rho$, then $\sigma(\bar{i})\neq \rho(\bar{i})$ for some $\bar{i}\in\{1,...,N_{n,j}\}$. Assume that $\sigma(\bar{i})=1$, and so $\rho(\bar{i})=0$. In this case we have 
    \[
        \hat{B}_{n,j}^\sigma \ \subseteq \ \hat{B}_{n,j,\bar{i}}^{\sigma(\bar{i})} \ = \ B_{n,j,\bar{i}},\qquad \hat{B}_{n,j}^\rho \ \subseteq \ \hat{B}_{n,j,\bar{i}}^{\rho(\bar{i})}\ = \ B_{n,j,\bar{i}}^c,
    \]
    which implies that $\hat{B}_{n,j}^\sigma$ and $\hat{B}_{n,j}^\rho$ are disjoint.
    Similarly, if $\sigma(\bar{i})=0$, and so $\rho(\bar{i})=1$, the two events are also disjoint. Then, we only need to show that 
    \[
         \bigcup_{\sigma\in S_j}\hat{B}_{n,j}^\sigma \ = \ \bigcup_{i=1}^{N_{n,j}} B_{n,j,i}.
    \]
    This can be easily verified by finite induction on $N_{n,j}$.
    Secondly, it is straightforward to show that 
        \[
                A_{n,j} \ = \ \bigcup_{\sigma\in S_j}\bigg(\bigcap_{i=1}^{N_{n,j}} \hat{B}_{n,j,i}^{\sigma(i)}\bigg)\cap\bigg(\bigcup_{i=1}^{N_{n,j}} \hat{C}_{n,j,i}^{\sigma(i)}\bigg) \ \eqqcolon \ \bigcup_{\sigma\in S_j}\big( \hat{B}_{n,j}^\sigma \cap \hat{C}_{n,j}^\sigma\big).
        \]
    To summarize, we have demonstrated that each  $A_{n,j}$ can be represented as the finite disjoint union
    \[
        A_{n,j} \ = \ \bigcup_{\sigma\in S_j} (\hat{B}_{n,j}^\sigma \cap \hat{C}_{n,j}^\sigma),
    \]
    where $\hat{B}_{n,j}^\sigma\in\Gc$, $\hat{C}_{n,j}^\sigma\in\Hc_{t_j}$ for all $\sigma\in S_j$. Then, if $\{A_{n,j}\}_{j=1}^M$ forms a partition of $\Omega$, then the collection $\{\hat{B}_{n,j}^\sigma \cap \hat{C}_{n,j}^\sigma\}_{\sigma\in S}$, where $S\coloneqq \cup_j S_j$, also constitutes a partition.

    \vspace{2mm}
    
    \noindent\textsc{Step} 4. \emph{We claim that  $\tau_n=\sum_{j=1}^M t_j\mathbbm{1}_{A_{n,j}}$ has the desired form given by \eqref{successione tau}.} By \textsc{Step} 3, without loss of generality, we can always assume that every $A_{n,j}\in\mathcal{A}_{t_j}$ is of the form
    \[
        A_{n,j} \ = \ B_{n,j}\cap C_{n,j},
    \]
    where $B_{n,j}\in\Gc$, $C_{n,j}\in\Hc_{t_j}$ and $j\in\{1,...,M\}$. Now, for every $j\in\{1,...,M\}$, define the following objects:
    \begin{itemize}
        \item $\Sigma_j \coloneqq \big\{\sigma =\{\sigma(1),...,\sigma(M)\}:\sigma(i)\in\{0,1\}\,\, \forall i\in\{1,...,M\},\, \sigma(j)=1\big\}$;
        \item $\hat{B}_{n,j}^k \coloneqq \begin{cases}
            B_{n,j}^c,& k=0\\
            B_{n,j},& k=1
        \end{cases}$;
        \item $\hat{C}_{n,j}^k \coloneqq \begin{cases}
            \emptyset,& k=0\\
            C_{n,j},& k=1
        \end{cases}$.
    \end{itemize}
    Then, it is easy to verify that we can rewrite the events $A_{n,j}$ as follows:
    \[
    \begin{cases}
        A_{n,1} = \bigcup_{\sigma\in\Sigma_1} \big[\big(\bigcap_{i=1}^M \hat{B}_{n,i}^{\sigma(i)}\big)\cap C_{n,1}\big],\\
        A_{n,j} = \bigcup_{\sigma\in\Sigma_j} \big[\big(\bigcap_{i=1}^M \hat{B}_{n,i}^{\sigma(i)}\big)\cap \big(C_{n,j}\setminus \cup_{i<j}\hat{C}_{n,i}^{\sigma(i)}\big)\big],& j\geq 2.
    \end{cases}
    \]
    Now, let us define the following new objects: 
    \begin{itemize}
        \item $\Sigma \coloneqq \cup_{j=1}^M\Sigma_j = S_M$, where $S_M$ is the family of sequences defined in \textsc{Step} 3;
        \item $\hat{B}_n^\sigma \coloneqq \bigcap_{i=1}^M \hat{B}_{n,i}^{\sigma(i)}$ for every $\sigma\in\Sigma$;
        \item $\hat{C}_{n,1}^\sigma \coloneqq
        \begin{cases}
        \emptyset,&\sigma\in\Sigma\setminus\Sigma_1\\
        C_{n,1},&\sigma\in\Sigma_1
        \end{cases},\hspace{2.6cm}$ for $j=1$,\\
        $\hat{C}_{n,j}^\sigma \coloneqq 
        \begin{cases}
            \emptyset,& \sigma\in\Sigma\setminus\Sigma_j\\
            C_{n,j}\setminus \big(\cup_{i<j}\hat{C}_{n,i}^{\sigma(i)}\big),&\sigma\in\Sigma_j
        \end{cases},\quad$ for $1<j<M$,\\
        $\hat{C}_{n,M}^\sigma \coloneqq \bigg(\bigcup_{i<M} \hat{C}_{n,i}^{\sigma(i)}\bigg)^c,\hspace{3.3cm}$ for $j=M$.
    \end{itemize}
    Then, it is straightforward to prove that we can represent the events $A_{n,j}$ by
    \begin{equation}\label{repr events A}
        A_{n,j} = \bigcup_{\sigma\in\Sigma} \hat{B}_n^\sigma \cap \hat{C}_{n,j}^\sigma,\qquad\text{for every }j=1,...,M.
    \end{equation}
    Thanks to the decomposition \eqref{repr events A} of the events $A_{n,j}$, it holds that
    \begin{align*}
        \sum_{j=1}^M t_j\mathbbm{1}_{A_{n,j}} \ &= \ \sum_{j=1}^M t_j\mathbbm{1}_{\cup_{\sigma\in \Sigma} (\hat{B}^\sigma_{n}\cap \hat{C}^\sigma_{n,j})} \ = \ \sum_{j=1}^M\bigg(\sum_{\sigma\in \Sigma} t_j\mathbbm{1}_{\hat{C}^\sigma_{n,j}}\mathbbm{1}_{\hat{B}^\sigma_{n}}\bigg)\\
        &= \ \sum_{\sigma\in \Sigma}\bigg(\sum_{j=1}^M t_j\mathbbm{1}_{\hat{C}^\sigma_{n,j}}\bigg)\mathbbm{1}_{\hat{B}^\sigma_{n}}\ \eqqcolon \ \sum_{\sigma\in \Sigma}\tau_n^\sigma \mathbbm{1}_{\hat{B}^\sigma_{n}}.
    \end{align*}
    Finally, we observe that for every $\sigma\in\Sigma$, since by construction $\hat{C}^\sigma_{n,j}\in\mathcal{H}_{t_j}$  for every $j$ and the collection $\{\hat{C}^\sigma_{n,j}\}_{j=1}^M$ is a partition of $\Omega$, $\tau_n^\sigma\in\tilde\Tc_{t,T}$. Then, we conclude that the sequence of stopping times
    \[
        \tau_n \ = \ \sum_{j=1}^M t_j\mathbbm{1}_{A_{n,j}} \ = \ \sum_{\sigma\in \Sigma}\tau_n^\sigma \mathbbm{1}_{\hat{B}^\sigma_{n}}
    \]
    has the desired form \eqref{successione tau}.
\end{proof}

\small
\bibliographystyle{plain}
\bibliography{bibliography.bib}

\begin{thebibliography}{10}

\bibitem{ArrowBlackwellGirshick}
K.~J. Arrow, D.~Blackwell, and M.~A. Girshick.
\newblock Bayes and minimax solutions of sequential decision problems.
\newblock {\em Econometrica}, 17:213--244, 1949.

\bibitem{Baldi}
P.~Baldi.
\newblock {\em Stochastic calculus. {An introduction through theory and exercises}}.
\newblock Universitext. Springer, Cham, 2017.

\bibitem{BouchardTouzi}
B.~Bouchard and N.~Touzi.
\newblock Weak dynamic programming principle for viscosity solutions.
\newblock {\em SIAM J. Control Optim.}, 49(3):948--962, 2011.

\bibitem{ChowRobbinsSiegmund}
Y.~S. Chow, H.~Robbins, and D.~Siegmund.
\newblock {\em Great expectations: the theory of optimal stopping}.
\newblock Houghton Mifflin Co., Boston, MA, 1971.

\bibitem{CossoPerelli_MeanField}
A.~Cosso and L.~Perelli.
\newblock Mean field optimal stopping with uncontrolled state.
\newblock {\em Preprint arXiv}, 2025.

\bibitem{DeAngelisMilazzo}
T.~De~Angelis and A.~Milazzo.
\newblock Dynamic programming principle for classical and singular stochastic control with discretionary stopping.
\newblock {\em Appl. Math. Optim.}, 88(1):Paper No. 7, 48, 2023.

\bibitem{ElKaroui}
N.~El~Karoui.
\newblock Les aspects probabilistes du contr\^ole stochastique.
\newblock In {\em Ninth {S}aint {F}lour {P}robability {S}ummer {S}chool---1979 ({S}aint {F}lour, 1979)}, volume 876 of {\em Lecture Notes in Math.}, pages 73--238. Springer, Berlin, 1981.

\bibitem{ElKarouiTanII}
N.~El~Karoui and X.~Tan.
\newblock {Capacities, Measurable Selection and Dynamic Programming Part II: Application in Stochastic Control Problems}.
\newblock {\em {Preprint arXiv:1310.3364v3}}, 2024.

\bibitem{Halmos}
P.~R. Halmos.
\newblock {\em Measure {T}heory}.
\newblock D. Van Nostrand Co., Inc., New York, 1950.

\bibitem{JackaLynn}
S.~D. Jacka and J.~R. Lynn.
\newblock Finite-horizon optimal stopping obstacle problems and the shape of the continuation region.
\newblock {\em Stochastics Stochastics Rep.}, 39(1):25--42, 1992.

\bibitem{JacodProtter}
J.~Jacod and P.~Protter.
\newblock {\em Probability essentials}.
\newblock Universitext. Springer-Verlag, Berlin, second edition, 2003.

\bibitem{KaratzasShreve98}
I.~Karatzas and S.~E. Shreve.
\newblock {\em Methods of mathematical finance}, volume~39 of {\em Applications of Mathematics (New York)}.
\newblock Springer-Verlag, New York, 1998.

\bibitem{Krylov}
N.~V. Krylov.
\newblock {\em Controlled diffusion processes}, volume~14 of {\em Stochastic Modelling and Applied Probability}.
\newblock Springer-Verlag, Berlin, 2009.
\newblock Translated from the 1977 Russian original by A. B. Aries, Reprint of the 1980 edition.

\bibitem{OksendalReikvam}
B.~\O{}ksendal and K.~Reikvam.
\newblock Viscosity solutions of optimal stopping problems.
\newblock {\em Stochastics Stochastics Rep.}, 62(3-4):285--301, 1998.

\bibitem{Pedersen}
J.~L. Pedersen.
\newblock Optimal stopping problems for time-homogeneous diffusions: a review.
\newblock In {\em Recent advances in applied probability}, pages 427--454. Springer, New York, 2005.

\bibitem{PeskirShiryaev}
G.~Peskir and A.~Shiryaev.
\newblock {\em Optimal stopping and free-boundary problems}.
\newblock Lectures in Mathematics ETH Z\"urich. Birkh\"auser Verlag, Basel, 2006.

\bibitem{Pham97}
H.~Pham.
\newblock Optimal stopping, free boundary, and {A}merican option in a jump-diffusion model.
\newblock {\em Appl. Math. Optim.}, 35(2):145--164, 1997.

\bibitem{Pham}
H.~Pham.
\newblock {\em Continuous-time stochastic control and optimization with financial applications}, volume~61 of {\em Stochastic Modelling and Applied Probability}.
\newblock Springer-Verlag, Berlin, 2009.

\bibitem{Shiryaev}
A.~Shiryaev.
\newblock {\em Optimal stopping rules}, volume~8 of {\em Stochastic Modelling and Applied Probability}.
\newblock Springer-Verlag, Berlin, second edition, 2008.
\newblock Reprint of the 1978 translation.

\bibitem{Snell}
J.~L. Snell.
\newblock Applications of martingale system theorems.
\newblock {\em Trans. Amer. Math. Soc.}, 73:293--312, 1952.

\bibitem{TouziBook}
N.~Touzi.
\newblock {\em Optimal stochastic control, stochastic target problems, and backward {SDE}}, volume~29 of {\em Fields Institute Monographs}.
\newblock Springer, New York; Fields Institute for Research in Mathematical Sciences, Toronto, ON, 2013.
\newblock With Chapter 13 by Ang\`es Tourin.

\end{thebibliography}

\end{document}